\let\itemref\ref
\newtheoremstyle{dtheoremnopar}{3 mm}{1 mm}{\itshape}{}{\bfseries}{.}{ }
{\thmname{#1}\thmnumber{ #2}\thmnote{ \mdseries(#3)\bfseries}}
\theoremstyle{dtheoremnopar}
\newcounter{theoremx}
\newtheorem{theoremalpha}[theoremx]{Theorem}
\newtheorem{corollaryalpha}[theoremx]{Corollary}
\newcommand{\tref}[1]{\ref{#1}} % References to main theorems
\newcommand\NN{\mathbb{N}}
\newcommand\ZZ{\mathbb{Z}}
\newcommand\inj{\hookrightarrow}
\newcommand\id[1]{\mathrm{id}_{#1}} % Identity map
\DeclareMathOperator{\Hom}{Hom}
\DeclareMathOperator{\Aut}{Aut}
\DeclareMathOperator{\Sym}{Sym}
\newcommand\sA{\mathcal{A}}
\newcommand\sB{\mathcal{B}}
\newcommand\sF{\mathcal{F}}
\newcommand\sG{\mathcal{G}}
\newcommand\sO{\mathcal{O}}
\newcommand\im{\mathfrak{m}}
\DeclareMathOperator{\Spec}{Spec}
\newcommand{\GL}{\mathrm{GL}}
\newcommand{\gitq}{/\!\!/} % GIT quotient
\newcommand\Hilb{\mathrm{Hilb}}
\newcommand\Quot{\mathrm{Quot}}
\newcommand\QCoh{\mathbf{QCoh}} % Quasi-coherent sheaves
\newcommand\Coh{\mathbf{Coh}} % Coherent sheaves
\newcommand\HilbSt{\mathscr{H}} % Hilbert stack
\newcommand{\weilr}{\mathbf{R}} % Weil restriction
\newcommand\qfin{\mathrm{qfin}} % quasi-finite superscript
\newcommand\catC{\mathbf{C}} % category
\newcommand\Pos{\mathbf{Pos}} % (1,2)-category of posets (a 2-category)
\newcommand\op{\mathrm{op}} % opposite category
\newcommand\APP{\mathbf{App}} % category of approximations
\newcommand\Stk{\mathbf{Stk}} % Algebraic stacks
\newcommand\Sub{\mathbf{Sub}} % Sub-objects (sheaves of modules/algebras)
\newcommand\fl{\mathrm{flat}} % flat morphisms
\newcommand\alg{\mathrm{alg}} % algebras
\let\lim\relax
\let\im\relax
\DeclareMathOperator{\lim}{lim}
\DeclareMathOperator{\colim}{colim}
\DeclareMathOperator{\im}{im}
\DeclareMathOperator{\Fun}{Fun}
\DeclareMathOperator{\Map}{Map}
\DeclareMathOperator{\Isom}{Isom} % Isom for stacks
\DeclareMathOperator{\ev}{ev} % evaluation
\newcommand\stX{\mathscr{X}} % Stack X (only used in applications to gms)
\newcommand\stY{\mathscr{Y}} % Stack Y (only used in applications to gms)
\newcommand\stW{\mathscr{W}} % Stack W (only used in applications to gms)
\newcommand\stU{\mathscr{U}} % Stack U (only used in applications to gms)
\newcommand\stG{\mathscr{G}} % Gerbe
\newcommand\Dqc{\mathbf{D}_{\mathrm{qc}}} % Derived category
\begin{document}

\title{Absolute noetherian approximation of algebraic stacks}
\author{David Rydh}
\address{KTH Royal Institute of Technology\\Department of 
  Mathematics\\SE-100 44 Stockholm\\Sweden}
\email{dary@math.kth.se}
\thanks{The author was supported by the Swedish Research Council 2015-05554 and the G\"oran Gustafsson Foundation for Research in Natural Sciences and Medicine.}
\date{2023-11-15}
\subjclass[2020]{14A20, 14D23}
\keywords{smooth d\'evissage, good moduli spaces}

\begin{abstract}
We show that every quasi-compact and quasi-separated algebraic stack
can be approximated by a noetherian algebraic stack. We give several
applications such as eliminating noetherian hypotheses in the theory
of good moduli spaces.
\end{abstract}

\maketitle

% -------------------------------------------------------------------

\setcounter{secnumdepth}{0}
\begin{section}{Introduction}
The purpose of this paper is to prove the following ultimate version of
absolute noetherian approximation for algebraic stacks,
confirming~\cite[Conj.~B]{rydh_approximation-sheaves}.
\begin{theoremalpha}\label{T:MT}
Let $X$ be an algebraic stack. The following are equivalent
\begin{enumerate}
\item\label{TI:qcqs}
  $X$ is quasi-compact and quasi-separated.
\item\label{TI:approx}
  There exists an algebraic stack $X_0$ of finite presentation
over $\Spec \ZZ$ and an affine morphism $X\to X_0$.
\end{enumerate}
\end{theoremalpha}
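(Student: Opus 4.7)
The implication (ii) $\Rightarrow$ (i) is immediate: any algebraic stack of finite presentation over $\Spec \ZZ$ is quasi-compact and quasi-separated, and affine morphisms preserve both properties, so $X$ inherits them from $X_0$.

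For the substantive direction (i) $\Rightarrow$ (ii), my plan is to use smooth dévissage, as signaled by the title's keywords. Let $\mathcal{C}$ denote the class of qcqs algebraic stacks satisfying (ii); I would show $\mathcal{C}$ contains every qcqs algebraic stack via three properties. Base case: $\mathcal{C}$ contains every affine scheme, since any ring $A$ is the filtered colimit of its finitely generated $\ZZ$-subalgebras $A_\alpha$, producing an affine $\Spec A \to \Spec A_\alpha$ with target of finite presentation over $\ZZ$. Closure under affine morphisms: if $X \to Y$ is affine with $Y \to Y_0$ realizing $Y \in \mathcal{C}$, then the composite $X \to Y_0$ is affine, whence $X \in \mathcal{C}$. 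As input, I would take the classical approximation theorem for qcqs algebraic spaces of Conrad-Lieblich-Olsson, which shows $\mathcal{C}$ already contains all qcqs algebraic spaces. The remaining task is a smooth descent step: if $U \to X$ is a smooth surjection with $U \in \mathcal{C}$, then $X \in \mathcal{C}$.

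The smooth descent step is the main obstacle. Given a smooth presentation $R \rightrightarrows U \to X$ by algebraic spaces, it is easy to approximate $U$ and $R$ individually by $U_0$ and $R_0$ of finite presentation over $\ZZ$, obtaining a candidate $X_0 := [U_0 / R_0]$; the difficulty is arranging the induced map $X \to X_0$ to be \emph{affine}, not merely of finite type or schematic. Concretely, one must approximate the quasi-coherent algebra $\sA$ on $U_0$ whose relative $\Spec$ recovers $U \to U_0$ together with its descent data along the groupoid, in a way that produces a quasi-coherent algebra on $X_0$. The strategy suggested by the keyword \emph{good moduli spaces} is to further dévisser to stacks whose stabilizers are well-controlled (linearly reductive, or at least adequate) and to use good moduli space morphisms $X \to M$ to descend algebras coherently: approximate $M$ as a qcqs algebraic space, then spread out the stack structure over $M$ while retaining affineness. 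The crux is bootstrapping from algebraic spaces through quotient stacks by linear algebraic groups to the fully general qcqs case without ever losing the affineness of the approximation map under the dévissage steps.
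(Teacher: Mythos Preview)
Your diagnosis of the problem is accurate up to the final paragraph: the trivial direction, the smooth d\'evissage framework, and the identification of affineness of $X\to X_0$ as the obstruction are all correct. But the proposed resolution via good moduli spaces is a wrong turn. Good moduli spaces appear in this paper as an \emph{application} of the main theorem, not as an ingredient in its proof; the keywords misled you. A generic qcqs stack need not admit a good moduli space, nor have linearly reductive (or even affine) stabilizers at closed points, so there is no way to bootstrap through that class to the general case.

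The actual mechanism is different and does not pass through any reductivity hypothesis. Given a smooth presentation $U\to X$ with $U$ affine, the paper factors it canonically as $U\to \pi_0(U/X)\to X$, where the second map is \'etale and representable (handled by prior work via \'etale d\'evissage) and the first map $f$ is smooth with \emph{geometrically connected fibers}. Such a morphism is \emph{pure} in the sense of Raynaud--Gruson, and for a pure flat finitely presented $f$ the pullback $f^*$ on quasi-coherent subalgebras admits a \emph{left} adjoint $f_!^{\alg}$ that preserves finite type and commutes with base change. This is precisely the tool that lets one push an approximation of $U$ down to $\pi_0(U/X)$ while retaining affineness: an approximation amounts (after fixing a target) to a directed system of finite-type subalgebras, and $f_!^{\alg}$ transports such a system along $f$. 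The paper packages this by introducing a partially ordered set $\APP_\Lambda(X)$ of approximations, proving it satisfies fppf descent, and showing that for $f$ as above the pullback $f^*\colon\APP_\Lambda(Y)\to\APP_\Lambda(X)$ has a right adjoint built from $f_!^{\alg}$. Your groupoid-approximation picture is not wrong as a heuristic, but without the $\pi_0$ factorization and the purity adjoint you have no way to force the descent data on your approximating algebra to be compatible enough to yield an affine map on the quotient.
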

When~\itemref{TI:approx} holds, we say that $X$ can be approximated
and $X_0$ is an approximation of $X$~\cite[Def.~7.1]{rydh_noetherian-approx}.
% In EGA IV, 8.13.4 such $X$ are said to be essentially affine.
Theorem~\tref{T:MT} was known for schemes, algebraic spaces,
Deligne--Mumford stacks and algebraic stacks with finite stabilizers,
see~\cite{rydh_noetherian-approx} and the references therein.
% Also mention~\cite[Cor.~7.3]{hall-rydh_addendum} which does quasi-finite
% diagonal which is not necessarily locally separated?

From basic results on stacks with approximation, we obtain
two corollaries. Firstly, we settle~\cite[Conj.~A]{rydh_approximation-sheaves}.
\begin{corollaryalpha}[Completeness property]\label{C:COMPLETE}
Let $X$ be a quasi-compact and quasi-separated algebraic stack. Then every
quasi-coherent $\sO_X$-module is a directed colimit of
$\sO_X$-modules of finite presentation. In particular, every quasi-coherent $\sO_X$-module of finite
type is a quotient of an $\sO_X$-module of finite presentation.
\end{corollaryalpha}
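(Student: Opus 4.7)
The plan is to use Theorem~\tref{T:MT} to reduce to a noetherian base and then transfer the classical filtered‑colimit‑of‑coherents theorem across the affine approximation morphism. By Theorem~\tref{T:MT}, fix an affine morphism $f\colon X \to X_0$ with $X_0$ of finite presentation (hence noetherian) over $\Spec\ZZ$. Since $f$ is affine, pushforward induces an equivalence between $\QCoh(X)$ and quasi‑coherent $\sA$-modules on $X_0$, where $\sA := f_*\sO_X$. Under this equivalence, an $\sA$-module is of finite presentation as an $\sO_X$-module if and only if it is of finite presentation as an $\sA$-module, as both conditions may be checked after pulling back to a smooth affine atlas of $X_0$. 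Hence it suffices to show that every quasi‑coherent $\sA$-module $\mathcal{N}$ on $X_0$ is a filtered colimit of $\sA$-modules of finite presentation as $\sA$-modules.

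Since $X_0$ is noetherian and qcqs, every quasi‑coherent $\sO_{X_0}$-module is the directed union of its coherent $\sO_{X_0}$-subsheaves. Applying this to $\mathcal{N}$, let $\{\mathcal{M}_\alpha\}$ be the system of its coherent $\sO_{X_0}$-subsheaves and let $\mathcal{N}_\alpha \subseteq \mathcal{N}$ be the $\sA$-submodule generated by $\mathcal{M}_\alpha$. Then $\mathcal{N} = \colim_\alpha \mathcal{N}_\alpha$, and each $\mathcal{N}_\alpha$ fits in a short exact sequence of $\sA$-modules
\[
0 \to \mathcal{K}_\alpha \to \sA \otimes_{\sO_{X_0}} \mathcal{M}_\alpha \to \mathcal{N}_\alpha \to 0.
\]
Applying the same trick to $\mathcal{K}_\alpha$ yields a filtered system of $\sA$-submodules $\sA\cdot\mathcal{L}_{\alpha,\beta} \subseteq \mathcal{K}_\alpha$ obtained from coherent $\mathcal{L}_{\alpha,\beta} \subseteq \mathcal{K}_\alpha$ and having colimit $\mathcal{K}_\alpha$. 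The quotients
\[
\mathcal{N}_\alpha^{(\beta)} := \bigl(\sA \otimes_{\sO_{X_0}} \mathcal{M}_\alpha\bigr)\big/\bigl(\sA\cdot\mathcal{L}_{\alpha,\beta}\bigr)
\]
are finitely presented as $\sA$-modules and satisfy $\mathcal{N}_\alpha = \colim_\beta \mathcal{N}_\alpha^{(\beta)}$, so the combined double colimit presents $\mathcal{N}$ as a filtered colimit of finitely presented $\sA$-modules. For the final assertion: given $\sF$ of finite type on $X$, write $\sF = \colim_i \sF_i$ as above; on any smooth affine atlas $U \to X$ the finitely many generators of $\sF|_U$ lift to some $\sF_i|_U$, whence $\sF_i|_U \to \sF|_U$ is surjective and surjectivity descends along the smooth cover $U \to X$.

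The chief subtlety to verify is the identification of finite presentation as an $\sO_X$-module with finite presentation as an $\sA$-module under the equivalence induced by $f_*$; granted Theorem~\tref{T:MT}, this is a routine check on a smooth affine atlas using that $f$ is affine, and the rest of the argument is the standard two‑step ``generators then relations'' cascade of filtered colimits, which is well suited to a noetherian base.
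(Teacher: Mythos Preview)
Your proof is correct and follows essentially the same approach as the paper: reduce via Theorem~\tref{T:MT} to an affine morphism $X\to X_0$ with $X_0$ noetherian, use that quasi-coherent sheaves on $X_0$ are unions of coherent subsheaves, and transfer this back to $X$. The paper packages the transfer step by invoking the counit of $(h^*,h_*)$ and citing \cite[Prop.~4.6]{rydh_noetherian-approx}, whereas you unpack it explicitly via the equivalence $\QCoh(X)\simeq \sA\text{-Mod}$ and the standard two-step ``generators then relations'' argument; these are the same argument at different levels of detail.
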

\begin{proof}
Let $h\colon X\to X_0$ be an approximation. Since $X_0$ is noetherian, every
quasi-coherent $\sO_X$-module is the union of its coherent
subsheaves~\cite[Prop.~15.4]{laumon}. The result then follows from considering
the counit of the adjunction
$(h^*,h_*)$, see~\cite[Prop.~4.6]{rydh_noetherian-approx}.
\end{proof}
% Gabber's theorem https://stacks.math.columbia.edu/tag/077K says that for any
% scheme $X$, not necessarily qcqs, QCoh(X) is generated by its
% $\kappa$-generated sheaves for some cardinal $\kappa$ and hence presentable.
% https://amathew.wordpress.com/2011/07/30/quasi-coherent-sheaves-presentable-categories-and-a-result-of-gabber/

Since the $\sO_X$-modules of finite presentation are exactly the compact
objects, Corollary~\tref{C:COMPLETE} says that $\QCoh(\sO_X)$ is compactly
generated, or in the terminology of~\cite{rydh_noetherian-approx}, that $X$ is
pseudo-noetherian.  Secondly, we have the following stronger version
of Theorem~\tref{T:MT}.
\begin{corollaryalpha}\label{C:MT-limit}
Let $f\colon X\to Y$ be a morphism between quasi-compact and quasi-separated
algebraic stacks. Then $f$ has an approximation, that is:
\begin{enumerate}
\item there exists a finitely presented morphism $X_0\to Y$ and an affine $Y$-morphism $X\to X_0$; and
\item there exists an inverse system $X_\lambda\to Y$ of finitely presented
  morphism with affine transition maps and inverse limit $X\to Y$.
\end{enumerate}
Moreover, if $f$ is of finite type, then in (i), $X\to X_0$ can be taken to be
a closed immersion and in (ii), the system can be chosen such that the
transition maps are closed immersions.
\end{corollaryalpha}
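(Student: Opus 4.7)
The plan is to bootstrap from the absolute approximation of Theorem~\tref{T:MT} to the relative situation over $Y$, via a product construction and the standard limit formalism for morphisms into a finitely presented target.

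For part~(i), apply Theorem~\tref{T:MT} to both $X$ and $Y$ to obtain affine morphisms $h\colon X\to X_0^{\mathrm{abs}}$ and $Y\to Y_0$ with $X_0^{\mathrm{abs}}$ and $Y_0$ of finite presentation over $\Spec\ZZ$. Since $X_0^{\mathrm{abs}}$ is noetherian and $h$ is affine, the quasi-coherent $\sO_{X_0^{\mathrm{abs}}}$-algebra $h_*\sO_X$ is the directed colimit of its finitely generated $\sO_{X_0^{\mathrm{abs}}}$-subalgebras by~\cite[Prop.~15.4]{laumon}, and taking relative Spec presents $X=\lim_\mu X_\mu$ with each $X_\mu\to X_0^{\mathrm{abs}}$ of finite presentation and with affine transition maps. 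The composition $X\to Y\to Y_0$ maps into the finitely presented stack $Y_0$, so the standard limit argument for morphisms, which applies because $X$ is an affine inverse limit of the $X_\mu$, furnishes some index $\mu$ and a morphism $g\colon X_\mu\to Y_0$ through which $X\to Y_0$ factors. Set $X_0:=X_\mu\times_{Y_0}Y$; then $X_0\to Y$ is finitely presented as a base change of $g$, and the induced map $X\to X_0$ is affine: the projection $X_0\to X_\mu$ is a base change of the affine morphism $Y\to Y_0$ and hence affine, while its composition with $X\to X_0$ equals the affine morphism $X\to X_\mu$; by the elementary fact that in a diagram $X\to Z\to W$ with $Z\to W$ affine, affineness of the composition implies affineness of $X\to Z$, we conclude.

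Part~(ii) is an immediate consequence of~(i) and Corollary~\tref{C:COMPLETE}: writing the approximation from~(i) as $X=\Spec_{X_0}(\sA)$ for a quasi-coherent $\sO_{X_0}$-algebra $\sA$, noetherianness of $X_0$ gives $\sA=\colim_\lambda \sA_\lambda$ with finitely generated $\sO_{X_0}$-subalgebras $\sA_\lambda$; the stacks $X_\lambda:=\Spec_{X_0}(\sA_\lambda)$ are then finitely presented over $X_0$, and hence over $Y$, with affine transitions and $X=\lim_\lambda X_\lambda$.

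For the finite-type refinement, note that when $f$ is of finite type the affine morphism $X\to X_0$ produced in~(i) is of finite type as well, and noetherianness of $X_0$ then realizes $X$ as a closed substack of $X_0\times_{\Spec\ZZ}\Spec\ZZ[T_1,\ldots,T_n]$ for some $n$; replacing $X_0$ by this product, which remains finitely presented over $Y$, makes $X\to X_0$ a closed immersion, proving the strengthened~(i). For~(ii) in this situation, write $X=V(\sI)$ inside $X_0$, filter the quasi-coherent ideal $\sI$ by its finitely generated subideals $\sI_\lambda$, and take $X_\lambda:=V(\sI_\lambda)$: each $X_\lambda$ is finitely presented over $X_0$, the transitions $X_{\lambda'}\hookrightarrow X_\lambda$ for $\sI_{\lambda'}\supseteq\sI_\lambda$ are closed immersions, and $X=\lim_\lambda X_\lambda$. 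The step I expect to require the most care is the limit factorization of $X\to Y_0$ through some $X_\mu\to Y_0$: this is the pivotal ingredient that transports information from the absolute approximation over $\Spec\ZZ$ to the desired relative approximation over $Y$, and it rests on quasi-separation and finite presentation of $Y_0$ together with the affine-limit presentation of $X$.
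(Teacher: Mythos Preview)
Your argument for part~(i) is correct and more explicit than the paper's proof, which proceeds by invoking the formalism of strict approximation type from~\cite{rydh_noetherian-approx}. The fiber product $X_0 := X_\mu\times_{Y_0}Y$ and the cancellation argument for affineness are exactly the right ideas.

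There is, however, a genuine gap in part~(ii): you assert that $X_0$ is noetherian, but it is not --- $X_0 = X_\mu\times_{Y_0}Y$ is only quasi-compact and quasi-separated, since $Y$ is. Corollary~\tref{C:COMPLETE} does give that $X_0$ is pseudo-noetherian, so $\sA$ is the union of its finitely generated subalgebras $\sA_\lambda$; but over a non-noetherian base a finitely generated subalgebra need not be of finite presentation, so $X_\lambda\to X_0$, and hence $X_\lambda\to Y$, is a priori only of finite type. Closing this gap is essentially the content of~\cite[Prop.~7.3,~7.4]{rydh_noetherian-approx}, which is what the paper invokes.

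There is a second issue in the finite-type refinement of~(i): even if $X_0$ were noetherian, a finite-type affine morphism $X\to X_0$ of stacks need not factor through a closed immersion into $X_0\times_{\Spec\ZZ}\Spec\ZZ[T_1,\ldots,T_n]$, since the algebra $\sA$ need not be generated as an $\sO_{X_0}$-algebra by global sections (think of $X_0$ a classifying stack). The correct target is $\Spec_{X_0}(\Sym\sF)$ for a finitely presented $\sO_{X_0}$-module $\sF$ mapping onto a generating submodule of $\sA$; the existence of such an $\sF$ again rests on Corollary~\tref{C:COMPLETE} rather than noetherianness. Once this is fixed, your argument for the finite-type refinement of~(ii) does go through, since a closed immersion defined by a finite-type ideal is of finite presentation regardless of noetherian hypotheses.
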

\begin{proof}
By the main theorem, $X$ and $Y$ can be approximated and are thus of strict
approximation type~\cite[Def.~2.9]{rydh_noetherian-approx}.
It follows that $f$ also is of strict approximation
type~\cite[Prop.~2.10 (vii), (viii)]{rydh_noetherian-approx} so
we can factor $f$ as $X\to X_0\to Y$~\cite[Prop.~4.8]{rydh_noetherian-approx}.
The remainder is now straightforward, and is spelled out
in~\cite[Prop.~7.3, 7.4]{rydh_noetherian-approx}.
%% We could also directly cite~\cite[Thm.~D]{rydh_noetherian-approx}.
\end{proof}

Corollary~\tref{C:MT-limit} also implies that 
\cite[Thm.~D]{rydh_noetherian-approx} applies to any morphism
between quasi-compact and quasi-separated algebraic stacks.
Another easy consequence is that every quasi-separated, but not necessarily
quasi-compact, morphism of algebraic stacks is
\emph{locally of approximation type}~\cite[\S1]{hall-rydh_gen-hilb-quot}.

\subsection*{Outline of the proof of the main theorem}
Let $X$ be a quasi-compact and quasi-separated algebraic stack. Let $U\to X$ be
a smooth presentation with $U=\Spec B$ an affine scheme. Then we can write $B$
as the union of its subalgebras $B_\lambda$ of finite type over $\ZZ$ and hence
$U=\varprojlim U_\lambda$ where $U_\lambda = \Spec(B_\lambda)$.  Unfortunately,
the inverse system $U_\lambda$ does not ``descend'' to an inverse system
$X_\lambda$ with limit $X$, not even for $\lambda$ large enough.

We solve this problem as follows. Fix a directed set $\Lambda$.  In
Section~\ref{S:almost-shape} we introduce colimits and limits of \emph{almost
  shape $\Lambda$}. These are diagrams $\{X_\lambda\}_{\lambda\geq \alpha}$ in
some category for some $\alpha\in \Lambda$. In Section~\ref{S:approximations}
we introduce the $2$-category $\APP_\Lambda(X)$ of approximations of $X$, that
is, diagrams $\{X_\lambda\}_{\lambda\geq \alpha}$ of almost shape $\Lambda$
with $X_\lambda\to \Spec \ZZ$ of finite presentation, affine transition maps
and limit $X$. We show that this $2$-category is equivalent to a
\emph{partially ordered set}. In Section~\ref{S:descent} we show that
$\APP_\Lambda$ satisfies faithfully flat descent: given a faithfully flat
morphism $X'\to X$, we have an equalizer:
\[
\xymatrix{%
\APP_\Lambda(X)\ar[r]
 & \APP_\Lambda(X')\ar@<.5ex>@{+->+}[r] \ar@<-.5ex>@{+->+}[r]
 & \APP_\Lambda(X'\times_X X').
}%
\]
The problem alluded to above is that if we start with an arbitrary
approximation of $X'$, it does not descend to $X$.

In Section~\ref{S:pure-adjoint}, we show that $f^*\colon \APP_\Lambda(Y)\to
\APP_\Lambda(X)$ admits a \emph{right} adjoint $f_*$ if $f\colon X\to Y$ is
smooth with \emph{geometrically connected fibers}. The existence of this adjoint
can be checked smooth-locally on $Y$ via descent. In this way, we can assume
that $Y$ already has an approximation $Y\to Y_0$ and in this case one can
write down an explicit formula for the adjoint.

The main theorem now follows from taking an arbitrary smooth presentation $U\to
X$ and considering its canonical factorization $U\to \pi_0(U/X)\to X$ (``smooth
d\'evissage''). The first morphism is smooth with connected fibers and handled
by the adjoint described above. The second morphism is \'etale and was dealt
with in~\cite{rydh_noetherian-approx} using \'etale d\'evissage.

The proof of the main theorem in~\cite{rydh_approximation-sheaves} followed a
similar smooth d\'evissage strategy. For a smooth morphism $f\colon X\to Y$
with geometrically connected fibers and a quasi-coherent sheaf $\sF$ on $Y$,
the functor $f^*\colon \Sub(\sF)\to \Sub(f^*\sF)$, on quasi-coherent
subsheaves, admits a left adjoint $f_!$. This left adjoint, extended to
subalgebras, is also used in Section~\ref{S:pure-adjoint}.

\subsection{Further applications}
In Section~\ref{S:applications} we use the main theorem to eliminate noetherian
or finite presentation assumptions in the following results:
\begin{enumerate}
\item Algebraicity of Quot schemes and Hom-stacks~\cite{hall-rydh_gen-hilb-quot,hall-rydh_coherent-tannaka-duality}.
\item Zariski's main theorem for stacks.
\item Proper coverings of separated stacks~\cite{olsson_proper-coverings}.
%\item Noetherian approximation~\cite{rydh_noetherian-approx}.
\item Local structure and other results on good moduli
  spaces~\cite{alper-hall-rydh_etale-local-stacks}.
\end{enumerate}
The main theorem has also already been used
in~\cite[Thm.~5.1]{alper-hall-halpern-leistner-rydh_local-third} to obtain a
general local structure theorem for algebraic stacks at points with linearly
reductive stabilizers without finiteness hypotheses.

\end{section}
\setcounter{secnumdepth}{3}

%%%%%%%%%%%%%%%%%%%%%%%%%%%%%%%%%%%%%%%%%%%%%%%%%%%%%%%%%%%%%%%%%%%%%%%%%%

\begin{section}{Categories of limit diagrams}\label{S:almost-shape}
Let $\Lambda$ be a directed set and let $\catC$ be a $2$-category. In this
section we introduce the $2$-category $\colim'(\Lambda,\catC)$ of colimit
diagrams of $\catC$ of
``almost shape $\Lambda$''. Roughly speaking, an object of
$\colim'(\Lambda,\catC)$ is a colimit diagram of
shape $\Lambda_{\geq \alpha}$ for some $\alpha$ and two objects are equal if
the colimit diagrams agree after increasing $\alpha$.

We let $\Lambda^\triangleright=\Lambda \cup \{\infty\}$ where $\infty$ is
strictly larger then all elements of $\Lambda$. We say that
$\overline{p}\colon \Lambda^\triangleright\to \catC$ is a colimit diagram of
shape $\Lambda$ if it exhibits $\overline{p}(\infty)$ as a colimit of the
diagram $p=\overline{p}|_\Lambda$. Similarly, we consider limit diagrams of
shape $\Lambda^\op$ and note that $\infty\in ({\Lambda^\triangleright})^\op$ is
now the initial element.

We let $\colim(\Lambda,\catC)\subset \Fun(\Lambda^\triangleright,\catC)$ denote
the full $2$-subcategory of colimit diagrams of shape $\Lambda$ and similarly
for $\lim(\Lambda^\op,\catC)\subset \Fun((\Lambda^\triangleright)^\op,\catC)$.
For any
indices $\alpha\leq \beta$ in $\Lambda$, we have a restriction functor
$\colim(\Lambda_{\geq \alpha},\catC)\to \colim(\Lambda_{\geq \beta},\catC)$
and evaluating in $\infty$ gives us functors $\ev_\infty\colon \colim(\Lambda_{\geq \alpha},\catC)\to \catC$.
We consider the following $2$-categories of (co)limit diagrams of
``almost shape $\Lambda$''
\begin{align*}
\colim'(\Lambda,\catC) &= \colim_{\alpha\in \Lambda} \colim(\Lambda_{\geq \alpha},\catC)\\
\lim'(\Lambda^\op,\catC) &= \colim_{\alpha\in \Lambda} \lim( (\Lambda_{\geq \alpha})^\op,\catC)
\end{align*}
that also come with evaluation functors $\ev_\infty$ to $\catC$.
Let us explicitly spell out the $2$-category $\colim'(\Lambda,\catC)$.
%% \begin{itemize}

%% \item
A \emph{strict} object of $\colim'(\Lambda,\catC)$
consists of an index $\alpha\in \Lambda$, objects $A_\lambda\in
\catC$ for every $\lambda\geq \alpha$ and $1$-morphisms
$\varphi_{\lambda_1\lambda_2}\colon A_{\lambda_1}\to A_{\lambda_2}$ for every
$\lambda_2\geq \lambda_1\geq \alpha$ such that
$\varphi_{\lambda\lambda}=\id{A_\lambda}$ for every $\lambda\geq \alpha$ and
$\varphi_{\lambda_2\lambda_3}\varphi_{\lambda_1\lambda_2}=\varphi_{\lambda_1\lambda_3}$ for every $\lambda_3\geq \lambda_2\geq \lambda_1\geq \alpha$
and $A_\infty=\colim_{\alpha\leq \lambda<\infty}
A_\lambda$. We denote such an object by $\{A_\lambda\}_{\lambda \geq \alpha}$
suppressing the $\varphi$'s.

A \emph{non-strict} object is similar but the two conditions on $\varphi$ are
replaced with specified $2$-isomorphisms
$\eta_\lambda\colon \id{A_\lambda} \Rightarrow \varphi_{\lambda\lambda}$
and $\mu_{\lambda_1\lambda_2\lambda_3}\colon
\varphi_{\lambda_2\lambda_3}\varphi_{\lambda_1\lambda_2}\Rightarrow
\varphi_{\lambda_1\lambda_3}$
such that
$\mu_{\lambda_1\lambda_2\lambda_2}(\eta_{\lambda_2}\star \id{\varphi_{\lambda_1\lambda_2}})=\id{\varphi_{\lambda_1\lambda_2}}
=\mu_{\lambda_1\lambda_1\lambda_2}(\id{\varphi_{\lambda_1\lambda_2}}\star \eta_{\lambda_1})$ and
$\mu_{\lambda_1\lambda_3\lambda_4}(\id{\varphi_{\lambda_3\lambda_4}}\star\mu_{\lambda_1\lambda_2\lambda_3})
=\mu_{\lambda_1\lambda_2\lambda_4}(\mu_{\lambda_2\lambda_3\lambda_4}\star\id{\varphi_{\lambda_1\lambda_2}})$
for every $\lambda_4\geq \lambda_3\geq \lambda_2\geq \lambda_1 \geq \alpha$.

%% \item
A $1$-morphism $\{A_\lambda\}_{\lambda \geq \alpha}\to \{B_\lambda\}_{\lambda
  \geq \beta}$ consists of an index $\gamma\in \Lambda$, greater than $\alpha$
and $\beta$, together with $1$-morphisms $f_\lambda\colon A_\lambda\to
B_\lambda$ for every $\lambda \geq \gamma$ and $2$-isomorphisms
$\tau_{\lambda_1\lambda_2}\colon
\varphi_{\lambda_1\lambda_2}f_{\lambda_1}\Rightarrow
f_{\lambda_2}\varphi_{\lambda_1\lambda_2}$ such that
$(\id{f_{\lambda_3}}\star \mu_{\lambda_1\lambda_2\lambda_3})(\tau_{\lambda_2\lambda_3}\star \tau_{\lambda_1\lambda_2})=\tau_{\lambda_1\lambda_3}(\mu_{\lambda_1\lambda_2\lambda_3}\star \id{f_{\lambda_1}})$
for every $\lambda_3\geq \lambda_2\geq \lambda_1 \geq \gamma$. We denote such an
object by $\{f_\lambda\}_{\lambda \geq \gamma}$, suppressing the $\tau$'s.

%% \item
A $2$-morphism $\{f_\lambda\}_{\lambda\geq \gamma}\Rightarrow
\{g_\lambda\}_{\lambda\geq \delta}$ is an equivalence class of the set
consisting of an index $\epsilon \in \Lambda$ greater than $\gamma$ and
$\delta$, together with $2$-morphisms $\rho_\lambda\colon f_\lambda \Rightarrow
g_\lambda$ for every $\lambda \geq \epsilon$ such that
$\tau_{\lambda_1\lambda_2}(\id{\varphi_{\lambda_1\lambda_2}}\star \rho_{\lambda_1})=
(\rho_{\lambda_2}\star \varphi_{\lambda_1\lambda_2})\tau_{\lambda_1\lambda_2}$ for every $\epsilon\leq \lambda_1\leq
\lambda_2$. We denote such an element by $\{\rho_{\lambda}\}_{\lambda\geq
  \epsilon}$ and two elements are equivalent if they agree after increasing
$\epsilon$.
%% \end{itemize}

Note that $\alpha,\beta,\gamma,\delta,\epsilon\in \Lambda$ whereas
$\lambda,\lambda_1,\lambda_2,\lambda_3\in \Lambda^{\triangleright}$.

When $\catC$ is the $2$-category of categories, or the $2$-category of
categories fibered over a fixed category, then the usual coherence results
show that $\colim'(\Lambda,\catC)$ is equivalent to the full $2$-subcategory
of strict objects.
\end{section}

%%%%%%%%%%%%%%%%%%%%%%%%%%%%%%%%%%%%%%%%%%%%%%%%%%%%%%%%%%%%%%%%%%%%%%%%%%

\begin{section}{The category of approximations}\label{S:approximations}
Let $\Lambda$ be a directed set and let $\Stk$ denote the $2$-category of
algebraic stacks. In this section, we study the category of limit diagrams of
algebraic stacks of almost shape $\Lambda$ with the following additional
assumptions:

\begin{definition}
The \emph{category of approximations}
$\APP_\Lambda$ of almost shape $\Lambda$ is the full $2$-subcategory of
$\lim'(\Lambda^\op,\Stk)$ consisting of objects $\{X_\lambda\}_{\lambda\geq
  \alpha}$ such that
\begin{enumerate}
\item $X_\lambda$ is of finite presentation over $\Spec \ZZ$ for all
  $\lambda\in \Lambda_{\geq \alpha}$, and
\item $\varphi_{\lambda\mu}\colon X_\mu\to X_\lambda$
  is affine and schematically dominant for all $\mu\geq \lambda\geq \alpha$.
\end{enumerate}
\end{definition}
Note that $\APP_\Lambda$ is a $(2,1)$-category since $\Stk$ is a
$(2,1)$-category. The affine morphism $\varphi_{\lambda\mu}\colon X_\mu\to
X_\lambda$ corresponds to a homomorphism $A_\lambda\to A_\mu$ of
$\sO_{X_\alpha}$-algebras and that $\varphi_{\lambda\mu}$ is schematically
dominant means that $A_\lambda\to A_\mu$ is injective. All the $X_\lambda$,
including $X_\infty$, are quasi-compact and quasi-separated so it makes no
difference if we consider limits in $\Stk$ or its full $2$-subcategory of
quasi-compact and quasi-separated stacks.

\begin{example}\label{E:approximation}
Let $X$ be an algebraic stack that can be approximated, that is, there
exists an algebraic stack $X_0$ of finite presentation over $\Spec \ZZ$ and an
affine morphism $h\colon X\to X_0$. Replacing $X_0$ by its schematic image, we
may assume that $h$ is schematically dominant. Let
$A_\infty=h_*\sO_X$. Let $\Lambda$ be the directed set of
$\sO_{X_0}$-subalgebras $A_\lambda\subseteq A_\infty$ of finite type. Then
$A_\infty=\varinjlim_\lambda A_\lambda$ since $X_0$ is noetherian
\cite[Prop.~15.4]{laumon}. If we let $X_\lambda=\Spec_{X_0}(A_\lambda)$, then
$\{X_\lambda\}$ is an element of $\APP_\Lambda$ with $X_\infty=X$.
\end{example}

The $2$-category $\APP_\Lambda$ is equivalent to the following $2$-category
where the intermediate $X_\lambda$, $\alpha<\lambda<\infty$, are replaced with
subalgebras.

%% \begin{itemize}
%% \item
An object $\{X_\lambda\}_{\lambda\geq \alpha}$ is an algebraic stack $X_\alpha$
of finite presentation over $\Spec \ZZ$ together with an affine schematically
dominant morphism $\varphi_\alpha\colon X_\infty\to X_\alpha$ and a colimit
diagram of finitely generated subalgebras $A_\lambda\subseteq A_\infty=(\varphi_{\alpha})_*
\sO_{X_\infty}$ of shape $\Lambda_{\geq \alpha}$. Here
$A_\lambda=(\varphi_{\alpha\lambda})_* \sO_{X_\lambda}$ and we do not need to
specify any $2$-isomorphisms analogous to
$\eta_\lambda$ and $\mu_{\lambda_1\lambda_2\lambda_3}$.
% We denote this by $(X_\alpha,\{A_\lambda\}_{\lambda \geq \alpha})$.

%% \item
A $1$-morphism $\{f_\lambda\}_{\lambda\geq \gamma} \colon
\{X_\lambda\}_{\lambda\geq \alpha}\to \{Y_\lambda\}_{\lambda\geq \beta}$ is a
$2$-commutative diagram
\[
\xymatrix{%
  X_\infty\ar[r]^{f_\infty}\ar[d]_{\varphi_\gamma}\drtwocell\omit{\quad\tau_{\gamma\infty}} & Y_\infty\ar[d]^{\varphi_\gamma}\\
  X_\gamma\ar[r]_{f_\gamma} & Y_\gamma
  %\ar@{}[ul]|{\nearrow}
}%
\]
such that if $B_\lambda\subseteq B_\infty$ and $A_\lambda\subseteq A_\infty$
are the algebras over $\sO_{X_\gamma}$ and $\sO_{Y_\gamma}$ corresponding to
$X_\lambda$ and $Y_\lambda$, then the image of
$f_\gamma^*A_\lambda\to f_\gamma^*A_\infty\to B_\infty$
is contained in $B_\lambda\subseteq B_\infty$ for every $\lambda\geq \gamma$.

%% \item
A $2$-morphism $\{\rho_\lambda\}_{\lambda\geq \epsilon}\colon
\{f_\lambda\}_{\lambda\geq \gamma}\Rightarrow \{g_\lambda\}_{\lambda\geq
  \delta}$ are $2$-morphisms $\rho_\infty\colon f_\infty\Rightarrow g_\infty$
and $\rho_\epsilon\colon f_\epsilon\Rightarrow g_\epsilon$ such that
$\tau_{\epsilon\infty}(\id{\varphi_\epsilon}\star \rho_\infty)=
(\rho_\epsilon\star \id{\varphi_\epsilon})\tau_{\epsilon\infty}$. Two
$2$-morphisms are identified if they agree after increasing $\epsilon$.
%% \end{itemize}

As before we have an evaluation map $\ev_\infty\colon \APP_\Lambda\to \Stk$.

\begin{proposition}\label{P:fully-faithful-on-morphisms}
Let $\{X_\lambda\}$ and $\{Y_\lambda\}$ be objects of $\APP_\lambda$.  Then the
map of groupoids $\Map_{\APP_\lambda}(\{X_\lambda\},\{Y_\lambda\})\to
\Map_\Stk(X_\infty,Y_\infty)$ is fully faithful.
\end{proposition}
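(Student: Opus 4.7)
My plan is to work in the equivalent description of $\APP_\Lambda$ using subalgebras, where a $2$-morphism from $\phi_1$ to $\phi_2$ is represented by a pair $(\rho_\infty,\rho_\epsilon)$ satisfying the compatibility $\tau_{\epsilon\infty}(\id{\varphi_\epsilon}\star\rho_\infty)=(\rho_\epsilon\star\id{\varphi_\epsilon})\tau_{\epsilon\infty}$, modulo the equivalence that increases $\epsilon$. The key external ingredient is the standard limit result for morphisms into an algebraic stack of finite presentation: if $W=\lim W_\lambda$ is an inverse limit of quasi-compact and quasi-separated algebraic stacks with affine transitions and $Z\to W_{\lambda_0}$ is of finite presentation, then
\[
\colim_{\lambda\geq\lambda_0}\Hom_{W_{\lambda_0}}(W_\lambda,Z)\longrightarrow\Hom_{W_{\lambda_0}}(W,Z)
\]
is an equivalence of groupoids.

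The main technical device is to reinterpret $2$-isomorphisms as sections of Isom-stacks. For each $\lambda\geq\epsilon$, set $I_\lambda:=\Isom_{X_\lambda}(f_\lambda,g_\lambda)$; concretely, $I_\lambda$ is the pullback of the diagonal $\Delta\colon Y_\lambda\to Y_\lambda\times Y_\lambda$ along $(f_\lambda,g_\lambda)\colon X_\lambda\to Y_\lambda\times Y_\lambda$. Since $Y_\lambda$ is of finite presentation over $\Spec\ZZ$, its diagonal is of finite presentation and representable by algebraic spaces, so $I_\lambda\to X_\lambda$ inherits these properties. A $2$-isomorphism $\rho_\lambda\colon f_\lambda\Rightarrow g_\lambda$ corresponds bijectively to a section $X_\lambda\to I_\lambda$. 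Compatibility of fiber products with limits of qcqs stacks along affine transitions yields $I_\infty\simeq\lim_\lambda(I_\lambda\times_{X_\lambda}X_\infty)$ as $X_\infty$-stacks, so that a $2$-isomorphism $\rho_\infty$ is precisely a compatible system of $X_\lambda$-morphisms $X_\infty\to I_\lambda$.

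Faithfulness is then reasonably direct. Given two pairs $(\rho_\infty,\rho_\epsilon),(\rho_\infty,\rho'_\epsilon)$ with the same $\rho_\infty$, the compatibility condition forces $\rho_\epsilon\star\id{\varphi_\epsilon}=\rho'_\epsilon\star\id{\varphi_\epsilon}$, i.e., the $X_\epsilon$-morphisms $\rho_\epsilon,\rho'_\epsilon\colon X_\epsilon\to I_\epsilon$ become equal after precomposition with $\varphi_\epsilon\colon X_\infty\to X_\epsilon$. The approximation theorem, applied to the finitely presented $I_\epsilon\to X_\epsilon$ and $X_\infty=\lim X_\lambda$, gives equality after precomposition with $\varphi^X_{\epsilon\lambda}$ for some $\lambda\geq\epsilon$, which is exactly the equivalence relation identifying the two $2$-morphisms in $\APP_\Lambda$. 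For fullness, given $\rho_\infty$ I use the limit description of $I_\infty$ to extract, for any fixed $\epsilon$, an $X_\epsilon$-morphism $X_\infty\to I_\epsilon$; approximation then produces a factorisation $X_\infty\to X_{\lambda_1}\to I_\epsilon$ over $X_\epsilon$, i.e., a section of $I_\epsilon\times_{X_\epsilon}X_{\lambda_1}\to X_{\lambda_1}$.

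The hard part will be completing fullness. The section just produced corresponds, via the $\tau$'s, to a $2$-isomorphism $\varphi^Y_{\epsilon\lambda_1}\circ f_{\lambda_1}\Rightarrow\varphi^Y_{\epsilon\lambda_1}\circ g_{\lambda_1}$, whereas what is needed is a genuine $\rho_{\lambda_1}\colon f_{\lambda_1}\Rightarrow g_{\lambda_1}$ whose whiskering with $\varphi^Y_{\epsilon\lambda_1}$ produces it. The whiskering map $I_{\lambda_1}\to I_\epsilon\times_{X_\epsilon}X_{\lambda_1}$ induced by the affine schematically dominant morphism $\varphi^Y_{\epsilon\lambda_1}\colon Y_{\lambda_1}\to Y_\epsilon$ is not surjective on sections in general, so a direct lift may not exist. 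I plan to handle this by iterating the argument: replace $\epsilon$ by $\lambda_1$ and run the approximation with $I_{\lambda_1}$ in place of $I_\epsilon$, exploiting that the whole family of induced $X_\lambda$-morphisms $X_\infty\to I_\lambda$ arises from the single global section $\rho_\infty$ of $I_\infty$, so by increasing the index sufficiently we can reach a level $\lambda_2$ at which a bona fide section $X_{\lambda_2}\to I_{\lambda_2}$ exists. Alternatively I would pass to the subalgebra description and recast the descent through $\varphi^Y$ as an algebraic condition which can be enforced by shrinking the subalgebra defining $Y_{\lambda_2}$.
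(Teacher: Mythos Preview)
Your setup via the Isom-stacks $I_\lambda$ and the use of standard limit results is exactly the paper's approach, and your faithfulness argument is essentially the paper's. The gap is in fullness: you correctly isolate the problem of lifting a section of $I_\epsilon\times_{X_\epsilon}X_{\lambda_1}\to X_{\lambda_1}$ to a section of $I_{\lambda_1}\to X_{\lambda_1}$, but your proposed ``iteration'' does not terminate and the subalgebra alternative is not a real plan. Iterating just produces, for each $n$, a section of $I_{\lambda_n}\times_{X_{\lambda_n}}X_{\lambda_{n+1}}$; there is no mechanism forcing these to eventually land in $I_{\lambda_{n+1}}$ itself, so you never get the $\rho_\lambda$ you need.

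What you are missing are the two structural hypotheses built into $\APP_\Lambda$. First, the transition maps $Y_\lambda\to Y_\alpha$ are \emph{affine}, hence separated, so the relative diagonal $Y_\lambda\to Y_\lambda\times_{Y_\alpha}Y_\lambda$ is a closed immersion; base-changing along $(f_\lambda,g_\lambda)\colon X_\lambda\to Y_\lambda\times Y_\lambda$ shows that the comparison map
\[
I_\lambda \longrightarrow I_\alpha\times_{X_\alpha} X_\lambda
\]
is a \emph{closed immersion}, not merely a monomorphism. Second, the transition maps $X_\infty\to X_\lambda$ are \emph{schematically dominant}. Now once you have a section $\rho_\lambda\colon X_\lambda\to I_\alpha\times_{X_\alpha}X_\lambda$ whose restriction to $X_\infty$ factors through the closed substack $I_\lambda$ (which it does, since $\rho_\infty$ is a section of $I_\infty\subseteq I_\lambda\times_{X_\lambda}X_\infty$), the preimage $\rho_\lambda^{-1}(I_\lambda)$ is a closed substack of $X_\lambda$ containing the schematic image of $X_\infty$, hence all of $X_\lambda$. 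Thus $\rho_\lambda$ factors through $I_\lambda$ directly, for \emph{every} $\lambda\geq\epsilon$, with no iteration required. This is the step that makes the argument close; add it and your proof is complete.
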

\begin{proof}
Let $\{f_\lambda\}_{\lambda}$ and $\{g_\lambda\}_{\lambda}$ be two morphisms
$\{X_\lambda\}\to \{Y_\lambda\}$. For both objects and morphisms, we may
assume that $\lambda\geq \alpha$ for some fixed $\alpha$.
We need to show that $\Phi\colon \Map(\{f_\lambda\}_{\lambda},\{g_\lambda\}_{\lambda})\to
\Map(f_\infty,g_\infty)$ is bijective.

Let $I_\lambda=\Isom_{X_\lambda}( f_\lambda, g_\lambda
)=X_\lambda\times_{Y_\lambda\times Y_\lambda,\Delta} Y_\lambda$. The natural
map $I_\lambda\to X_\lambda$ is representable and of finite presentation. We
identify $2$-morphisms $\rho_\lambda\colon f_\lambda\Rightarrow g_\lambda$ with
sections of $I_\lambda\to X_\lambda$. Let $\mu\geq\lambda\geq \alpha$.  Since
$\varphi_{\lambda\mu}\colon X_\mu\to X_\lambda$ is affine, the induced morphism
$I_\mu\to I_\lambda\times_{X_\lambda} X_\mu$ is a closed immersion.

We first show that $\Phi$ is injective.
Suppose we are given $\{(\rho_i)_\lambda\}\colon \{f_\lambda\}\Rightarrow
\{g_\lambda\}$ for $i=1,2$ with $\lambda\geq \alpha$, such that
$(\rho_1)_\infty=(\rho_2)_\infty$. Identifying $(\rho_i)_\lambda$ with
sections of $I_\lambda\to X_\lambda$, we have
$(\rho_1)_\alpha\varphi_{\alpha\infty}=\varphi_{\alpha\infty}(\rho_1)_\infty
=\varphi_{\alpha\infty}(\rho_2)_\infty=(\rho_2)_\alpha\varphi_{\alpha\infty}$.
The two sections $(\rho_i)_\alpha\varphi_{\alpha\infty}$ of
$I_\alpha\times_{X_\alpha} X_\infty\to X_\infty$ thus coincide.
Since $I_\alpha\to X_\alpha$ is of
finite presentation, it follows that the two sections
$(\rho_i)_\alpha\varphi_{\alpha\lambda}$ of $I_\alpha\times_{X_\alpha}
X_\lambda\to X_\lambda$ coincide for all sufficiently large $\lambda$.
Since $I_\lambda\to I_\alpha\times_{X_\alpha} X_\lambda$ is a monomorphism,
it follows that $(\rho_1)_\lambda=(\rho_2)_\lambda$ for all sufficiently
large $\lambda$.

We now show that $\Phi$ is surjective.
Let $\rho_\infty\colon f_\infty\Rightarrow g_\infty$ be a $2$-morphism and
identify it with a section of $I_\infty\to X_\infty$. We have a
map $\varphi_{\alpha\infty}\rho_\infty\colon X_\infty\to I_\infty \to I_\alpha$
and since $I_\alpha\to X_\alpha$ is of finite presentation, it factors through
a map $\widetilde{\rho}_\epsilon\colon X_\epsilon \to I_\alpha$ for some
$\epsilon\geq \alpha$. Let $\lambda \geq \epsilon$. Then we have an induced
section $\rho_\lambda\colon X_\lambda\to I_\alpha\times_{X_\alpha} X_\lambda$.
But $X_\infty\to X_\lambda$ is schematically dominant, $I_\lambda\to
I_\alpha\times_{X_\alpha} X_\lambda$ is a closed immersion and
$\varphi_{\alpha\infty}\rho_\infty$ factors through $I_\lambda$. It follows
that $\rho_\lambda$ factors uniquely through $I_\lambda$. Thus, we have
a $2$-morphism $\{\rho_\lambda\}_{\lambda \geq \epsilon}$.
\end{proof}

Proposition~\ref{P:fully-faithful-on-morphisms} says that given
$\{X_\lambda\}$ and $\{Y_\lambda\}$ and $f_\infty\colon X_\infty\to Y_\infty$
there exists at most one morphism $\{f_\lambda\}$ above $f_\infty$ up to
unique $2$-isomorphism. In particular we have:

\begin{corollary}
Let $X$ be an algebraic stack. The $2$-category
$\APP_\Lambda(X):=\ev_\infty^{-1}(X)$ is a preordered set,
hence equivalent to a partially ordered set.
\end{corollary}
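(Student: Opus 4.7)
The plan is to read the corollary off Proposition~\ref{P:fully-faithful-on-morphisms} almost immediately. Fix two objects $\{X_\lambda\}, \{Y_\lambda\} \in \APP_\Lambda(X)$, so that $X_\infty = Y_\infty = X$, and consider the hom groupoid $\Map_{\APP_\Lambda(X)}(\{X_\lambda\}, \{Y_\lambda\})$. By definition of the fiber $\ev_\infty^{-1}(X)$, this is (up to equivalence) the preimage under $\ev_\infty$ of $\id{X} \in \Map_\Stk(X, X)$, and by the proposition the evaluation $\Map_{\APP_\Lambda}(\{X_\lambda\}, \{Y_\lambda\}) \to \Map_\Stk(X, X)$ is fully faithful.

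Restricting a fully faithful functor of groupoids to the preimage of a single object (taken with only its identity $2$-automorphism) yields a fully faithful functor into the terminal groupoid. Any groupoid admitting such a functor is either empty or equivalent to the terminal groupoid: fullness kills every non-identity $2$-automorphism, while faithfulness on $1$-morphisms forces a unique $2$-isomorphism between any two parallel morphisms. Therefore every hom groupoid in $\APP_\Lambda(X)$ is empty or contractible, which is precisely the condition for the $2$-category $\APP_\Lambda(X)$ to be equivalent to a preordered set.

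To conclude, any preorder is equivalent, as a category, to a partially ordered set by passing to a skeleton — that is, identifying two objects whenever morphisms exist in both directions. The proof is thus essentially formal; the only point requiring minor care is to clarify that the strict and pseudo versions of $\ev_\infty^{-1}(X)$ coincide up to equivalence in this $(2,1)$-categorical setting, and this is automatic from the proposition, which controls both $1$- and $2$-morphisms. I do not anticipate any genuine obstacle.
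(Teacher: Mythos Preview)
Your argument is correct and is precisely the deduction the paper has in mind: the corollary is stated immediately after the remark that Proposition~\ref{P:fully-faithful-on-morphisms} yields at most one morphism above a given $f_\infty$ up to unique $2$-isomorphism, with no further proof given. Your write-up just spells out this formal consequence in more detail than the paper does.
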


If $\{X_\lambda\},\{Y_\lambda\}\in \APP_\Lambda(X)$, then we write
$\{X_\lambda\}\geq \{Y_\lambda\}$ if there exists a morphism
$\{X_\lambda\}\to \{Y_\lambda\}$.

\begin{remark}\label{R:approximation-vs-APP-nonempty}
If $\APP_\Lambda(X)$ is not empty, then we have an affine morphism $X\to
X_\alpha$ with $X_\alpha$ of finite presentation over $\ZZ$, that is, $X$ has
an approximation. Conversely, if $X$ has an approximation,
then we can find a directed set $\Lambda$ such that $\APP_\Lambda(X)$ is
non-empty, cf.\ Example~\ref{E:approximation}.
\end{remark}

\begin{corollary}\label{C:approximations=colimits-of-subalgebras}
Let $X$ be an algebraic stack. Let $h\colon X\to X_0$ be an affine
schematically dominant morphism to an algebraic stack $X_0$ of finite
presentation over $\ZZ$.
%% Note that if $\APP_\Lambda(X)$ is non-empty, then we can take $X_0=X_\alpha$
%% and $h=\varphi_{\alpha\infty}$.
Let $A=h_*\sO_X$. Then $\APP_\Lambda(X)^\op$ is equivalent to the category
$\APP_\Lambda(A)$ of colimit diagrams
$\{A_\lambda\}$ of finitely generated $\sO_{X_0}$-subalgebras of $A$ of almost
shape $\Lambda$ and colimit $A_\infty=A$.
\end{corollary}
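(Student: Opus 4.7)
The plan is to construct the equivalence via the relative-spec functor
$F\colon \APP_\Lambda(A)\to \APP_\Lambda(X)^\op$ that sends a colimit
diagram $\{A_\lambda\}$ of finitely generated $\sO_{X_0}$-subalgebras of
$A$ to the approximation $\{\Spec_{X_0}(A_\lambda)\}$. Well-definedness is
routine: each $\Spec_{X_0}(A_\lambda)$ is affine of finite presentation
over $X_0$, hence of finite presentation over $\Spec \ZZ$; the inclusions
$A_\lambda\hookrightarrow A_\mu$ give affine schematically dominant
transition maps $\Spec_{X_0}(A_\mu)\to \Spec_{X_0}(A_\lambda)$; and
relative spec converts the colimit $\colim A_\lambda=A$ into the limit
$\Spec_{X_0}(A)=X$ (using that $h$ is affine). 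Both categories are
(equivalent to) preordered sets --- $\APP_\Lambda(X)$ by the preceding
corollary, $\APP_\Lambda(A)$ by inclusion of filtrations up to cofinal
equivalence --- and $F$ is visibly order-preserving and order-reflecting by
the universal property of relative spec. It therefore remains only to
prove essential surjectivity.

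Given $\{X_\lambda\}_{\lambda\geq\alpha}\in \APP_\Lambda(X)$, the plan is
to use the equivalent description stated just before
Proposition~\ref{P:fully-faithful-on-morphisms}: this presents
$\{X_\lambda\}$ via a base $X_\alpha$ of finite presentation over
$\Spec \ZZ$, an affine schematically dominant
$\varphi_\alpha\colon X\to X_\alpha$, and a colimit diagram $\{B_\lambda\}$
of finitely generated $\sO_{X_\alpha}$-subalgebras of
$(\varphi_\alpha)_*\sO_X$. To rebase from $X_\alpha$ to $X_0$, I would
invoke the standard limit theorem for algebraic stacks --- morphisms to
finitely presented stacks factor through finite stages of filtered limits
with affine transition maps --- so that, after possibly enlarging $\alpha$,
the morphism $h$ factors as $h=f_\alpha\circ \varphi_\alpha$ for some
$f_\alpha\colon X_\alpha\to X_0$. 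Setting
$f_\lambda=f_\alpha\circ \varphi_{\alpha\lambda}$, the pushforwards
$C_\lambda:=(f_\lambda)_*\sO_{X_\lambda}\subseteq A$ form an increasing
family of $\sO_{X_0}$-subalgebras of $A$ with colimit $A$, using that
$(f_\alpha)_*$ commutes with filtered colimits of quasi-coherent sheaves on
the noetherian stack $X_\alpha$.

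The hard part will be ensuring finite generation of these subalgebras over
$\sO_{X_0}$, which amounts to each $X_\lambda\to X_0$ being affine --- a
condition that can fail when $X_\alpha\to X_0$ is not affine. To handle
this, I would exploit that $X_0$ is noetherian, so each $C_\lambda$ is the
directed union of its finitely generated $\sO_{X_0}$-subalgebras, and
carefully choose a $\Lambda$-indexed cofinal family $\{A_\lambda\}$ of
finitely generated subalgebras satisfying $A_\lambda\subseteq A_\mu$ for
$\lambda\leq \mu$ and $\colim A_\lambda=A$. The final verification that
$F(\{A_\lambda\})$ represents the same equivalence class as
$\{X_\lambda\}$ in the preorder on $\APP_\Lambda(X)$ then reduces to
comparing the two filtrations of $A$ on $X_0$, which agree up to cofinality
by construction.
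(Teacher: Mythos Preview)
Your strategy coincides with the paper's through the construction of $F$ and the reduction to essential surjectivity; the paper also factors $h\colon X\to X_0$ through some $X_\gamma$ using finite presentation of $X_0$. The divergence is in how the affineness obstruction is handled. The paper resolves it in one stroke: since $h$ is affine and $X=\varprojlim_{\lambda\geq\alpha} X_\lambda$ with affine bonding maps, \cite[Thm.~C]{rydh_noetherian-approx} guarantees that after increasing $\gamma$ the morphism $X_\gamma\to X_0$ is itself affine; then every $X_\lambda\to X_0$ for $\lambda\geq\gamma$ is affine, and the pushforwards $A_\lambda:=(f_\lambda)_*\sO_{X_\lambda}$ are already finitely generated $\sO_{X_0}$-subalgebras with $\Spec_{X_0}(A_\lambda)=X_\lambda$, so $F(\{A_\lambda\})=\{X_\lambda\}$ on the nose.

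Your workaround has a genuine gap. If $X_\lambda\to X_0$ is not affine, then $X_\lambda$ is \emph{not} recovered as $\Spec_{X_0}$ of any subalgebra of $A$, so the phrase ``comparing the two filtrations of $A$ on $X_0$'' has no meaning for the object $\{X_\lambda\}$. Concretely, an isomorphism in $\APP_\Lambda(X)$ requires, for all large $\lambda$, a morphism $\Spec_{X_0}(A_\lambda)\to X_\lambda$ over $X$; but $\Spec_{X_0}(A_\lambda)$ has no given map to $X_\alpha$, and $X_\lambda=\Spec_{X_\alpha}(B_\lambda)$ lives over $X_\alpha$, so there is no way to produce this map from your data. ``Cofinality'' of the two families inside $A$ is not the right notion here: equality in $\APP_\Lambda(X)$ is level-wise (eventually), not up to reindexing. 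The fix is exactly the paper's: invoke approximation of the property ``affine'' to force $X_\gamma\to X_0$ to be affine, after which the problem disappears.
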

\begin{proof}
We have already seen that $\APP_\Lambda(X)$ is a preordered set and a
colimit diagram $\{A_\lambda\}$ gives rise to an element of $\APP_\Lambda(X)$ by
applying $\Spec_{X_0}(-)$. Conversely, given $\{X_\lambda\}_{\lambda\geq \alpha}$ we may
factor $X_\infty=X\to X_0$ through $X_\gamma$ for some $\gamma\geq
\alpha$. After increasing $\gamma$, we can arrange so that $X_\gamma\to X_0$ is
affine~\cite[Thm.~C]{rydh_noetherian-approx}. Let $A_\lambda$ be the
push-forward of the structure sheaf along $X_\lambda\to X_\gamma\to X_0$ for
every $\lambda\geq \gamma$. Then $\{A_\lambda\}_{\lambda\geq \gamma}$ is an
object of $\APP_\Lambda(A)$.
\end{proof}

If $\{A_\lambda\},\{B_\lambda\}\in \APP_\Lambda(A)$, then we write
$\{A_\lambda\}\leq \{B_\lambda\}$ if there exists a morphism $\{A_\lambda\}\to
\{B_\lambda\}$, or equivalently, if $A_\lambda\subseteq B_\lambda$ for all
sufficiently large $\lambda$. Note that the convention is opposite to that in
$\APP_\Lambda(X)$.

\begin{remark}
Two elements $\{A_\lambda\}_{\lambda\geq \alpha}$ and
$\{B_\lambda\}_{\lambda\geq \beta}$ have least upper bound $\{A_\lambda\cup
B_\lambda\}_{\lambda \geq \gamma}$ where $\gamma$ is some upper bound of
$\alpha$ and $\beta$. In particular, if $\APP_\Lambda(X)$ is non-empty, then
$\APP_\Lambda(X)$ is an upper semi-lattice. However, $\APP_\Lambda(X)$ is not
necessarily a lattice. Even if $X_0=\Spec k$ is the spectrum of a field and $A$
is a $k$-algebra, the intersection of two finitely generated sub-algebras of
$A$ need not be finitely generated.
\end{remark}

If $X$ is of finite presentation over $\Spec \ZZ$, then $\APP_\Lambda(X)$ is the
singleton set for any directed set $\Lambda$. If $X$ is not quasi-compact
and quasi-separated, then $\APP_\Lambda(X)$ is empty.
\end{section}

%%%%%%%%%%%%%%%%%%%%%%%%%%%%%%%%%%%%%%%%%%%%%%%%%%%%%%%%%%%%%%%%%%%%%%%%%%

\begin{section}{The stack of approximations}\label{S:descent}
We have seen that the fibers of $\ev_\infty\colon \APP_\Lambda\to \Stk$ are
preordered sets. In this section, we show that $\APP_\Lambda\to \Stk$ is
a stack after restricting to flat and finitely presented morphisms.

\subsection{Fibered category}
We begin by studying cartesian arrows in $\APP_\Lambda$.
\begin{definition}
We say that a $1$-morphism $\{f_\lambda\}\colon \{X_\lambda\}\to \{Y_\lambda\}$
in $\APP_\Lambda$
is \emph{cartesian} if there exists an index $\alpha\in \Lambda$ such that
for every $\mu\geq \lambda\geq \alpha$, the square
\[
\xymatrix{%
X_\mu\ar[r]^{f_\mu}\ar[d]^{\varphi_{\lambda\mu}}
  & Y_\mu\ar[d]^{\varphi_{\lambda\mu}}\\
X_\lambda\ar[r]^{f_\lambda} & Y_\lambda
}%
\]
is cartesian.
\end{definition}

The cartesian $1$-morphisms are cartesian in the sense of fibered categories:

\begin{proposition}\label{P:cartesian}
Let $\{h_\lambda\}\colon \{X_\lambda\}\to \{Z_\lambda\}$ and
$\{g_\lambda\}\colon \{Y_\lambda\}\to \{Z_\lambda\}$ be $1$-morphisms in
$\APP_\Lambda$ and suppose that we are given a commutative diagram
\[
\xymatrix@R=5mm{% % \ar@{}[d]|{\Downarrow \rho}
\rrtwocell\omit{<3.5>\rho} & Y_\infty\ar[dr]^{g_\infty} & \\
X_\infty\ar[ur]^f\ar[rr]_{h_\infty} && Z_\infty.
}%
\]
If $\{g_\lambda\}$ is cartesian, then there exists a $1$-morphism
$\{f_\lambda\}\colon \{X_\lambda\}\to \{Y_\lambda\}$ and a $2$-isomorphism
$\{\rho_\lambda\}\colon \{g_\lambda f_\lambda\} \Rightarrow \{h_\lambda\}$ such
that $f_\infty=f$ and $\rho_\infty=\rho$. Moreover, $(f_\lambda,\rho_\lambda)$
is unique up to unique $2$-isomorphism.
\end{proposition}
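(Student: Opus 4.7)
My plan is to encode the pair $(f, \rho)$ as a section of an auxiliary morphism of finite presentation, descend that section by a standard limit theorem, and read off $\{f_\lambda\}$ and $\{\rho_\lambda\}$ from the descended section. The uniqueness part will fall out of Proposition~\ref{P:fully-faithful-on-morphisms}.

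First I would fix $\alpha \in \Lambda$ large enough that $\{g_\lambda\}$ is cartesian for all $\lambda, \mu \geq \alpha$ and that the whole given data is defined there. Setting $W_\lambda := Y_\lambda \times_{Z_\lambda} X_\lambda$ with projection $p_\lambda \colon W_\lambda \to X_\lambda$, the morphism $p_\alpha$ is of finite presentation since $X_\alpha, Y_\alpha, Z_\alpha$ are all of finite presentation over $\Spec \ZZ$. Cartesianness of $\{g_\lambda\}$ gives canonical isomorphisms $W_\mu \cong W_\alpha \times_{X_\alpha} X_\mu$ for every $\mu \geq \alpha$, and since fiber products commute with limits of stacks along affine transition maps, also $W_\infty := Y_\infty \times_{Z_\infty} X_\infty \cong W_\alpha \times_{X_\alpha} X_\infty$. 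By the universal property of fiber products, the data $(f, \rho)$ is exactly a section $s_\infty \colon X_\infty \to W_\infty$ of $p_\infty$.

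The central step is to descend $s_\infty$ to a section $s_\mu \colon X_\mu \to W_\mu$ for some $\mu \geq \alpha$. This is where the finite-presentation property of $p_\alpha$ is used: since $X_\infty = \lim_\mu X_\mu$ along affine transition maps, the groupoid of $X_\alpha$-morphisms $X_\infty \to W_\alpha$ is the $2$-filtered colimit of the groupoids of $X_\alpha$-morphisms $X_\mu \to W_\alpha$, by the standard limit theorem for algebraic stacks (see \cite{rydh_noetherian-approx}). Pulling $s_\mu$ back to every $\lambda \geq \mu$ defines $s_\lambda \colon X_\lambda \to W_\lambda$; composing with the projection $W_\lambda \to Y_\lambda$ gives $f_\lambda \colon X_\lambda \to Y_\lambda$ equipped with a $2$-isomorphism $\rho_\lambda \colon g_\lambda f_\lambda \Rightarrow h_\lambda$. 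Because the whole family is defined by base change from a single section at level $\mu$, the coherence data (the $\tau_{\lambda_1 \lambda_2}$ and the subalgebra containment condition from Section~\ref{S:approximations}) are automatic. A final adjustment along the canonical $2$-isomorphism $s_\mu \times_{X_\mu} X_\infty \cong s_\infty$ realizes $f_\infty = f$ and $\rho_\infty = \rho$ on the nose.

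For uniqueness, Proposition~\ref{P:fully-faithful-on-morphisms} immediately yields a unique $2$-isomorphism $\{f_\lambda\} \Rightarrow \{f'_\lambda\}$ lifting $\id{f}$ between any two candidate lifts, and its compatibility with the two families of $\rho$'s is forced by the same full faithfulness applied to $2$-morphisms. The main potential obstacle is the $2$-filtered colimit statement for sections of the finitely presented morphism $p_\alpha$, but this is a well-established part of the approximation toolkit for algebraic stacks and can be invoked directly.
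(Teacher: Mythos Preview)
Your proof is correct and follows essentially the same route as the paper's. Both arguments descend the data $(f,\rho)$ through the inverse limit using that the target is of finite presentation, then use the cartesianness of $\{g_\lambda\}$ to manufacture $f_\lambda$ for all $\lambda$ above the chosen index; the only cosmetic difference is that the paper descends $X_\infty\to Y_\alpha$ and the $2$-isomorphism separately, whereas you package $(f,\rho)$ as a single section of $W_\alpha\to X_\alpha$ and descend that.
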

\begin{proof}
The uniqueness is Proposition~\ref{P:fully-faithful-on-morphisms}.  For the
existence, pick an index $\alpha$ for which $\{Y_\lambda\}$ is defined.  The
composition $X_\infty\to Y_\infty\to Y_\alpha$ then factors through
$X_\gamma\to Y_\alpha$ for some $\gamma$ that we can take to be larger than
$\alpha$. After increasing $\gamma$, the two compositions $X_\gamma\to
Y_\alpha\to Z_\alpha$ and $X_\gamma\to Z_\gamma\to Z_\alpha$ coincide up to
some $2$-isomorphism compatible with $\rho$. Since $\{g_\lambda\}$ is
cartesian, this gives, for every $\lambda\geq \gamma$, a map $f_\lambda\colon
X_\lambda\to Y_\lambda=Y_\alpha\times_{Z_\alpha} Z_\lambda$ together with a
$2$-isomorphism $\rho_\lambda\colon g_\lambda f_\lambda \Rightarrow h_\lambda$.
\end{proof}

We also have lots of cartesian arrows:

\begin{proposition}
Let $\{Y_\lambda\}$ be an object in $\APP_\Lambda$.  If $f\colon X\to Y_\infty$
is a flat morphism of finite presentation, then there exists a cartesian arrow
$\{f_\lambda\}\colon \{X_\lambda\}\to \{Y_\lambda\}$ such that $X_\infty=X$ and
$f_\infty=f$ and $(\{X_\lambda\}, \{f_\lambda\})$ is unique up to unique
$2$-isomorphism.
\end{proposition}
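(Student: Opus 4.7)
Uniqueness is immediate from Proposition~\ref{P:fully-faithful-on-morphisms} applied to $\id{X}\colon X\to X$: any two cartesian lifts of $f$ give rise to two $1$-morphisms in $\APP_\Lambda$ whose image under $\ev_\infty$ is canonically $2$-isomorphic, hence the lifts themselves are canonically $2$-isomorphic.

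For existence, pick $\alpha\in \Lambda$ for which $\{Y_\lambda\}_{\lambda\geq \alpha}$ is defined. Since $Y_\infty=\lim_{\lambda\geq \alpha} Y_\lambda$ with affine transition maps and $f$ is of finite presentation, the standard absolute noetherian approximation results for morphisms of finite presentation between inverse limits (\cite[Prop.~B.2 and Prop.~B.3]{rydh_noetherian-approx}, based on \cite[IV$_3$.8.10.5 and IV$_3$.11.2.6.1]{EGA}) show that there exist $\gamma\geq \alpha$, a finitely presented morphism $f_\gamma\colon X_\gamma\to Y_\gamma$, and a $Y_\infty$-isomorphism $X\cong X_\gamma\times_{Y_\gamma} Y_\infty$. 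Flatness of $f$ implies, after increasing $\gamma$, flatness of $f_\gamma$ by the same limit argument.

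Now define, for every $\lambda\geq \gamma$,
\[
X_\lambda := X_\gamma\times_{Y_\gamma} Y_\lambda,\qquad f_\lambda\colon X_\lambda\to Y_\lambda \text{ the second projection,}
\]
with the evident transition maps $\varphi_{\lambda\mu}\colon X_\mu\to X_\lambda$ induced by $\varphi_{\lambda\mu}\colon Y_\mu\to Y_\lambda$. The squares defining these transitions are cartesian by associativity of fiber products, so $\{f_\lambda\}$ is a cartesian arrow in the fibered-category sense, provided we verify that $\{X_\lambda\}_{\lambda\geq \gamma}$ lies in $\APP_\Lambda$. Each $X_\lambda$ is of finite presentation over $\Spec \ZZ$ as a fiber product of such; each $\varphi_{\lambda\mu}\colon X_\mu\to X_\lambda$ is affine as the pullback of the affine morphism $Y_\mu\to Y_\lambda$; and each $\varphi_{\lambda\mu}$ is schematically dominant because $f_\lambda$ is flat and schematic dominance of an affine morphism is preserved under flat base change (the injection $\sO_{Y_\lambda}\hookrightarrow (\varphi_{\lambda\mu})_*\sO_{Y_\mu}$ stays injective after pulling back along the flat map $f_\lambda$, and pullback of pushforward along an affine map under a flat base change equals pushforward of pullback). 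Finally, $\lim_{\lambda\geq \gamma} X_\lambda = X_\gamma\times_{Y_\gamma} \lim_{\lambda\geq\gamma} Y_\lambda = X_\gamma\times_{Y_\gamma} Y_\infty \cong X$, so $X_\infty=X$ and $f_\infty=f$ under this identification.

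The main obstacle is the descent of both finite presentation and flatness of $f$ to a morphism $f_\gamma$ at some finite level; once this is in hand, everything else is a formal consequence of the cartesian construction and the compatibility of flat base change with schematic dominance.
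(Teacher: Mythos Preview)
Your proof is correct and follows essentially the same approach as the paper: uniqueness via Proposition~\ref{P:fully-faithful-on-morphisms}, existence by descending $f$ to a flat finitely presented $f_\gamma$ using the approximation results of \cite[App.~B]{rydh_noetherian-approx}, and then defining $X_\lambda$ by base change, with schematic dominance of the transition maps following from flatness of $f_\gamma$. You give more detail on the verifications than the paper does, but the strategy is identical.
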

\begin{proof}
The uniqueness is Proposition~\ref{P:fully-faithful-on-morphisms}.  For the
existence, we note that $f\colon X_\infty\to Y_\infty$ descends to a flat and
finitely presented morphism $f_\alpha\colon X_\alpha \to
Y_\alpha$ for some $\alpha$~\cite[App.~B]{rydh_noetherian-approx}. For every
$\lambda\in \Lambda_{\geq \alpha}$, we let
$X_\lambda=X_\alpha\times_{Y_\alpha} Y_\lambda$ and let $f_\lambda\colon
X_\lambda\to Y_\lambda$ be the projection. Note that $X_\mu\to X_\lambda$
is schematically dominant since $f_\alpha$ is flat.
\end{proof}

This means that the restriction $\ev_\infty\colon \APP^\fl_\Lambda\to \Stk^\fl$ is
a fibered $2$-category where $\Stk^\fl$ is the non-full $2$-subcategory of
all algebraic stacks but with $1$-morphisms that are flat and of finite
presentation, and $\APP^\fl_\Lambda$ is the non-full $2$-subcategory with all
objects and $1$-morphisms $\{f_\lambda\}$ such that $f_\infty$ is flat and of
finite presentation. Equivalently, we have a $2$-functor
\[
\APP_\Lambda\colon (\Stk^\fl)^\op\to \Pos, \quad X\mapsto \APP_\Lambda(X)
\]
where $\Pos$ denotes the $2$-category of partially ordered sets.
% (or preordered sets).

\subsection{Descent}
We will now show that $\APP_\Lambda$ has faithfully flat descent.

\begin{theorem}\label{T:descent}
Let $f\colon X'\to X$ be a faithfully flat morphism of finite presentation. Then
$\APP_\Lambda$ is a stack with respect to $f$, that is,
\[
\xymatrix{%
\APP_\Lambda(X)\ar[r]
 & \APP_\Lambda(X')\ar@<.5ex>@{+->+}[r] \ar@<-.5ex>@{+->+}[r]
 & \APP_\Lambda(X'\times_X X')
}%
\]
is an equalizer of partially ordered sets. Equivalently, it is an equalizer of
sets such that $\APP_\Lambda(X) \to \APP_\Lambda(X')$ is
order-reflecting.
\end{theorem}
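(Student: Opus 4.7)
The plan is to prove the two required properties separately: that the pullback map $\APP_\Lambda(X)\to\APP_\Lambda(X')$ is order-reflecting, and that every object of $\APP_\Lambda(X')$ satisfying the descent condition lifts to $\APP_\Lambda(X)$. My central tool will be Corollary~\ref{C:approximations=colimits-of-subalgebras}, which converts questions about $\APP_\Lambda$ into questions about finitely generated quasi-coherent subalgebras, where fpqc descent is well developed.

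For the order-reflecting part, suppose $\{X_\lambda\}, \{Y_\lambda\}\in\APP_\Lambda(X)$ with $f^*\{X_\lambda\}\leq f^*\{Y_\lambda\}$. I would first fix a common ambient $X_0$ (e.g., the base of $\{X_\lambda\}$) and, by Corollary~\ref{C:approximations=colimits-of-subalgebras}, describe both approximations via finitely generated subalgebras $A_\lambda, B_\lambda\subseteq A := h_*\sO_X$ for sufficiently large $\lambda$; the order relation then becomes $B_\lambda\subseteq A_\lambda$. Spreading $f$ over $X_0$ to a faithfully flat morphism $g\colon X'_0\to X_0$ (standard limit arguments), the pullback approximations correspond to $g^*A_\lambda, g^*B_\lambda\subseteq g^*A$, and inclusion of quasi-coherent subsheaves descends along a faithfully flat cover.

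For the existence/surjectivity, given $\{X'_\lambda\}\in\APP_\Lambda(X')$ with $p_1^*\{X'_\lambda\} = p_2^*\{X'_\lambda\}$ in $\APP_\Lambda(X'')$, where $X'' := X'\times_X X'$, the heart of the proof will be constructing a single base $X_\alpha$ for $X$ finitely presented over $\ZZ$. Fix $\alpha$ large enough so that $h'\colon X'\to X'_\alpha$ is affine and schematically dominant. For each $i\in\{1,2\}$, spread $p_i\colon X''\to X'$ to a flat finitely presented morphism $X''_{\alpha,i}\to X'_\alpha$; the descent hypothesis, via Proposition~\ref{P:fully-faithful-on-morphisms}, produces a canonical identification $X''_{\alpha,1}\cong X''_{\alpha,2}$, yielding a single stack $X''_\alpha$ with two flat maps to $X'_\alpha$. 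Spreading similarly the identity section, composition, and inverse of the groupoid $X''\rightrightarrows X'$ (the cocycle condition on $X'''$ is automatic, as the three projections factor through $X''$), one obtains a flat finitely presented groupoid $X''_\alpha\rightrightarrows X'_\alpha$. Its quotient $X_\alpha := [X'_\alpha/X''_\alpha]$ is algebraic and finitely presented over $\ZZ$, and the induced $h\colon X\to X_\alpha$ is affine and schematically dominant by fpqc descent of these properties along the fppf cover $X'_\alpha\to X_\alpha$. Finally, fpqc descent of quasi-coherent subsheaves descends each finitely generated $A'_\lambda\subseteq A' := h'_*\sO_{X'}$ to a finitely generated $A_\lambda\subseteq A := h_*\sO_X$; by Corollary~\ref{C:approximations=colimits-of-subalgebras} this assembles into an element $\{X_\lambda\}\in\APP_\Lambda(X)$ with $f^*\{X_\lambda\} = \{X'_\lambda\}$.

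The principal obstacle will be the groupoid quotient producing $X_\alpha$: one must carefully spread the full groupoid structure of $X''\rightrightarrows X'$ using the cartesian machinery of Sections~\ref{S:almost-shape}--\ref{S:approximations}, invoke algebraicity of flat finitely presented groupoid quotients, and verify that $h\colon X\to X_\alpha$ inherits the desired properties by fpqc descent. Once $X_\alpha$ is in hand, descending the tower of subalgebras to assemble $\{X_\lambda\}$ is routine.
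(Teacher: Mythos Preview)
Your proposal is correct and follows essentially the same route as the paper: order-reflection is reduced to flat descent of subalgebra inclusions via Corollary~\ref{C:approximations=colimits-of-subalgebras}, and effectivity is obtained by spreading the \v{C}ech groupoid $X''\rightrightarrows X'$ to an fppf groupoid at finite level and taking its stack quotient. The only difference is packaging: the paper builds the groupoid as an object of $\APP_\Lambda$ (verifying the axioms once via Proposition~\ref{P:fully-faithful-on-morphisms}) and forms the quotients $X_\lambda$ at every level simultaneously, whereas you first produce a single base $X_\alpha$ and then descend the tower of subalgebras---these agree since $X_\lambda=\Spec_{X_\alpha}(A_\lambda)$ either way. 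Your remark that ``the cocycle condition on $X'''$ is automatic'' is a little loose; what is actually used is that the finitely many groupoid identities (unit, associativity, inverse) hold for $X''\rightrightarrows X'$ and therefore spread to $X''_\alpha\rightrightarrows X'_\alpha$ after enlarging $\alpha$, exactly as the paper argues with Proposition~\ref{P:fully-faithful-on-morphisms}.
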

\begin{proof}
We first show that $\APP_\Lambda(X)\to \APP_\Lambda(X')$ is
order-reflecting. Let $\{ X_{1,\lambda}\}$ and $\{X_{2,\lambda}\}$ be
objects of $\APP_\Lambda(X)$ such that $f^*\{ X_{1,\lambda}\} \leq f^*\{
X_{2,\lambda}\}$.
Choose an algebraic stack $X_0$ of finite presentation over $\ZZ$
and an affine schematically dominant morphism $X\to X_0$, e.g., take
$X_0=X_{1,\alpha}$. After replacing $X_0$ with a finer approximation of $X$,
we can find a flat morphism $f_0\colon
X'_0\to X_0$ of finite presentation such that $X'=X'_0\times_{X_0} X$. Let $A$
and $A'=f_0^*A$ be the $\sO_{X_0}$- and $\sO_{X'_0}$-algebras corresponding to
$X$ and $X'$. By Corollary~\ref{C:approximations=colimits-of-subalgebras}
we can identify $\APP_\Lambda(X)^\op$ with the category $\APP_\Lambda(A)$ of
colimit diagrams $\{A_\lambda\}$ of
$\sO_{X_0}$-subalgebras of $A$ of almost shape $\Lambda$ and similarly for $X'$.
Since
$f^*\{A_{1,\lambda}\} \leq f^*\{A_{2,\lambda}\}$, there exists an index
$\gamma$ such that $f^*A_{1,\lambda}\subseteq f^*A_{2,\lambda}$ for all
$\lambda\geq \gamma$. By flat descent, it follows that $A_{1,\lambda}\subseteq
A_{2,\lambda}$ for all $\lambda\geq \gamma$. That is, 
$\{ X_{1,\lambda}\}\leq \{ X_{2,\lambda}\}$.

Since $\APP_\Lambda(X)\to \APP_\Lambda(X')$ is order-reflecting, it is also
injective. Let $X''=X'\times_X X'$ and let $X'''=X'\times_X X'\times_X X'$.
Let $\{ X'_\lambda \}\in \APP_\Lambda( X' )$ such that the two
pull-backs to $\APP_\Lambda( X'' )$ are equal. It remains to show that
$\{X'_\lambda\}$ comes from an object of $\APP_\Lambda(X)$.

The three pull-backs of $\{ X'_\lambda \}$
to $\APP_\Lambda( X''' )$ are also equal.
Choose representatives $\{ X''_\lambda \}$ and $\{ X'''_\lambda \}$ of the
pull-backs in $\APP_\Lambda$. Then we have
flat cartesian maps
\[
\xymatrix{%
 \{X'_\lambda\}
 & \{X''_\lambda\} \ar@<.5ex>@{+->+}[l] \ar@<-.5ex>@{+->+}[l]_{\pi_1}
 & \{X'''_\lambda\} \ar@<1ex>@{+->+}[l] \ar@{+->+}[l] \ar@<-1ex>@{+->+}[l]_{\pi_{12}}
% & \{X''''_\lambda\} \ar@<1.5ex>@{+->+}[l] \ar@<.5ex>@{+->+}[l] \ar@<-.5ex>@{+->+}[l] \ar@<-1.5ex>@{+->+}[l]_{\pi_{123}}
}%
\]
in $\APP_\Lambda$ and
$\{X'''_\lambda\}=\{X''_\lambda\}\times_{\pi_1,\{X'_\lambda\},\pi_2}
\{X''_\lambda\}$. By Proposition~\ref{P:cartesian}, the diagonal $X'\to
X'\times_X X'$ induces a map $\Delta\colon \{X'_\lambda\}\to \{X''_\lambda\}$.
The maps $s=\pi_1$, $t=\pi_2$, $c=\pi_{13}$, $e=\Delta$ endows
$\{X''_\lambda\}\rightrightarrows \{X'_\lambda\}$ with the structure of a
groupoid in $\APP_\Lambda$. The axioms, which involves $2$-isomorphisms between 
various compositions and identities between $2$-isomorphisms and hold for
$X''\rightrightarrows X'$, are satisfied by
Proposition~\ref{P:fully-faithful-on-morphisms}.  Since the axioms involve a
finite number of morphisms and a finite number of compositions, we may find an
index $\alpha$ such that $X''_\lambda\rightrightarrows X'_\lambda$ becomes a
groupoid for every $\lambda\geq \alpha$. If $X_\lambda$ denotes the stack
quotient, then we obtain an element $\{X_\lambda\}_{\lambda\geq \alpha}$ of
$\APP_\Lambda(X)$ such that $f^*\{X_\lambda\}=\{X'_\lambda\}$.
\end{proof}

\end{section}

%%%%%%%%%%%%%%%%%%%%%%%%%%%%%%%%%%%%%%%%%%%%%%%%%%%%%%%%%%%%%%%%%%%%%%%%%%

\begin{section}{Adjoints for pure morphisms}\label{S:pure-adjoint}
Let $f\colon X\to Y$ be a morphism of algebraic stacks and let $\sF$ be a
quasi-coherent $\sO_Y$-module. We obtain a functor
\[
\widetilde{f^*}\colon \Sub(\sF)\to \Sub(f^*\sF)
\]
taking a quasi-coherent $\sO_Y$-submodule $\sF_0\subseteq \sF$ to the image of
$f^*\sF_0\to f^*\sF$. When $f$ is flat, then $\widetilde{f^*}\sF_0=f^*\sF_0$.
When $f$ is quasi-compact and quasi-separated, then $\widetilde{f^*}\sF_0$ has
a \emph{right adjoint}
\[
\widetilde{f_*}\colon \Sub(f^*\sF)\to \Sub(\sF)
\]
taking $\sG_0\subseteq f^*\sF$ to $f_*\sG_0 \times_{f_*f^*\sF} \sF$.
Note that $\widetilde{f^*}$ always preserves submodules of finite type,
but in general $\widetilde{f_*}$ does not.
Since $\Sub(-)$ is a partially ordered set, $(\widetilde{f^*},\widetilde{f_*})$
is an example of a Galois connection and $\widetilde{f_*}\sG_0$ is the
largest submodule $\sF_0\subseteq \sF$ such that $\widetilde{f^*}\sF_0\subseteq
\sG_0$. Since $\widetilde{f^*}$ preserves unions, we also have that
$\widetilde{f_*}\sG_0$ is the union of all submodules $\sF_i\subseteq \sF$
such that
$\widetilde{f^*}\sF_i\subseteq \sG_0$.

If a \emph{left adjoint} $f_!$ to $\widetilde{f^*}$ exists, then $f_!\sG_0$ is
the intersection of all submodules $\sF_i\subseteq \sF$ such that
$\sG_0\subseteq \widetilde{f^*}\sF_i$. This intersection always makes sense,
but only defines a left adjoint if $\widetilde{f^*}f_!\sG_0$ contains
$\sG_0$. This is not always the case since $\widetilde{f^*}$ does not preserve
intersections in general, even for flat $f$
\cite[Ex.~6.1]{rydh_approximation-sheaves}.

Now assume that $f$ is flat, of finite presentation, and \emph{pure} in the
sense of Raynaud--Gruson \cite[D\'ef.\ 3.3.3]{raynaud-gruson},
\cite[Def.~4.8]{rydh_approximation-sheaves}. Then a left adjoint
\[
f_!\colon \Sub(f^*\sF)\to \Sub(\sF)
\]
exists~\cite[Thm.~6.3]{rydh_approximation-sheaves}. It also preserves submodules
of finite type and commutes with arbitrary base change $g\colon Y'\to Y$ in the
sense that $\widetilde{g^*}f_! = f'_! \widetilde{g'^*}$ where $f'$ and $g'$
denote the pull-backs of $f$ along $g$ and $g$ along
$f$~\cite[Thm.~6.3]{rydh_approximation-sheaves}.

%% \begin{remark}
%% If $Y$ is quasi-compact and quasi-separated, then the compact objects of
%% $\Sub(X)$ and $\Sub(Y)$ are the submodules of finite type. Since $f^*$
%% preserves filtered unions (in fact any unions, since $f^*$ is a left
%% adjoint), it follows from categorical nonsense that its left adjoint
%% $f_!$ preserves compact objects.
%% (This argument does not use that \Sub(X) or \Sub(Y) are compactly generated.)
%% \end{remark}

This result immediately generalizes to subalgebras:

\begin{theorem}\label{T:left-adjoint-for-subalg}
Let $f\colon X\to Y$ be flat, of finite presentation, and \emph{pure}. Let
$\sA$ be a quasi-coherent $\sO_Y$-algebra. Then $f^*\colon \Sub(\sA)\to
\Sub(f^*\sA)$ has a left adjoint
\[
f^{\alg}_!\colon \Sub(f^*\sA)\to \Sub(\sA).
\]
Moreover, $f^{\alg}_!$ preserves subalgebras of finite type and commutes with
arbitrary base change.
\end{theorem}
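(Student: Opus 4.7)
The plan is to bootstrap the module-level left adjoint $f_!$ from~\cite[Thm.~6.3]{rydh_approximation-sheaves} to the algebra setting. Given a quasi-coherent $\sO_X$-subalgebra $\sB_0\subseteq f^*\sA$, I would define $f_!^{\alg}(\sB_0)$ to be the $\sO_Y$-subalgebra of $\sA$ generated by the submodule $f_!(\sB_0)\subseteq \sA$. The elementary observation to lean on is that since $f$ is flat, $f^*$ is exact and symmetric monoidal, so it commutes with the ``subalgebra generated by a submodule'' construction: writing $\langle \sF_0\rangle$ for the subalgebra of $\sA$ generated by a submodule $\sF_0$, one has $f^*\langle \sF_0\rangle = \langle f^*\sF_0\rangle$ inside $f^*\sA$.

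To verify the Galois connection, I would check the two inequalities. For the unit $\sB_0\subseteq f^*f_!^{\alg}(\sB_0)$, one chains $\sB_0\subseteq f^*f_!(\sB_0)\subseteq f^*f_!^{\alg}(\sB_0)$, where the first inclusion is the module-level unit and the second is monotonicity of $f^*$ applied to $f_!(\sB_0)\subseteq f_!^{\alg}(\sB_0)$. Conversely, if $\sA'\subseteq \sA$ is a subalgebra with $\sB_0\subseteq f^*\sA'$, the module adjunction gives $f_!(\sB_0)\subseteq \sA'$, and since $\sA'$ is multiplicatively closed, the subalgebra $f_!^{\alg}(\sB_0)$ it generates is contained in $\sA'$ as well.

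For the addenda, I would write $\sB_0$ as the subalgebra generated by a submodule $\sM$, which can be taken of finite type whenever $\sB_0$ is of finite type over $\sO_X$. The universal property just proved forces $f_!^{\alg}(\sB_0) = \langle f_!(\sM)\rangle$, and since the module-level $f_!$ preserves finite type submodules, the resulting subalgebra is of finite type. For base change along $g\colon Y'\to Y$, the same rewriting gives $f_!^{\alg}(\sB_0) = \langle f_!(\sM)\rangle$, and since $g^*$ commutes both with the module-level $f_!$ (by~\cite[Thm.~6.3]{rydh_approximation-sheaves}) and with subalgebra-generation, one concludes.

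The main point to check carefully is the commutation $f^*\langle \sF_0\rangle = \langle f^*\sF_0\rangle$ and the independence of $\langle f_!(\sM)\rangle$ from the choice of module generators $\sM$ for $\sB_0$, but both are straightforward from flatness and the monoidal structure on quasi-coherent sheaves. The genuinely hard analytic content---purity and its consequences for the module-level adjoint and its base change---enters only as a black box from~\cite[Thm.~6.3]{rydh_approximation-sheaves}.
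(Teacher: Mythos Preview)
Your approach is essentially identical to the paper's: define $f_!^{\alg}\sB_0$ as the subalgebra generated by the module-level $f_!\sB_0$, and deduce everything from \cite[Thm.~6.3]{rydh_approximation-sheaves}. Your verification of the adjunction and of base-change compatibility is fine and in fact more explicit than the paper's (which simply writes $f_!^{\alg}\sB_0=\im(\Sym_{\sO_Y}(f_!\sB_0)\to\sA)$ and says ``it is clear'').

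There is, however, one step you gloss over that the paper treats with care. You assert that a finite-type subalgebra $\sB_0\subseteq f^*\sA$ can be written as $\langle\sM\rangle$ for a submodule $\sM$ of finite type. This amounts to writing $\sB_0$ as the filtered union of its finite-type $\sO_X$-submodules, which is precisely the completeness property for $X$---and for general algebraic stacks that is not known a priori (indeed, establishing it is one of the goals of the paper). The paper circumvents this by first working fppf-locally on $Y$ and reducing to $Y$ affine; then $X$, being of finite presentation over an affine scheme, is pseudo-noetherian by \cite[Prop.~4.8]{rydh_noetherian-approx}, and the desired $\sM$ (the paper calls it $\sG_0$) exists. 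You should insert this localization step before choosing $\sM$; once you do, your argument goes through and matches the paper's.
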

\begin{proof}
Let $\sB_0\subseteq f^*\sA$ be a subalgebra. It is clear that $f^{\alg}_!\sB_0$
is the smallest subalgebra containing $f_!\sB_0\subseteq \sA$, that is,
$f^{\alg}_!\sB_0= \im\left( \Sym_{\sO_Y}(f_!\sB_0)\to \sA \right)$. Since
symmetric products and images commute with pull-backs, it is also clear that
$f^{\alg}_!$ commutes with arbitrary base change.

Now assume that $\sB_0$ is an $\sO_X$-algebra of finite type. To prove that
$f^{\alg}_!\sB_0$ is of finite type, we may work fppf-locally on $Y$ and assume
that $Y$ is affine. Then $X$ is
pseudo-noetherian~\cite[Prop.~4.8]{rydh_noetherian-approx}
so we may write $\sB_0$ as
the union of its $\sO_X$-submodules of finite type. In particular, there
is a submodule $\sG_0\subseteq \sB_0$ of finite type such that $\sB_0$ is the
smallest subalgebra containing $\sG_0$. Then $f^{\alg}_!\sB_0$ is the smallest
subalgebra containing $f_!\sG_0$, hence of finite type.
\end{proof}

\begin{remark}
The right adjoint $f_*$ for submodules is also a right adjoint for
subalgebras. In general, however, $f_*$ does not preserve algebras of
finite type.
%% This is even false for flat morphisms between varieties.
%%
%% Example: There is an affine three-dimensional $k$-variety $Y$ and an open
%% smooth subvariety $X$ (the complement of a Weil-divisior which is not
%% $\QQ$-Cartier) such that the affine hull of $Y$ is not of finite type.
%% That is, if $f\colon X\to Y$ denote the open inclusion, then $f_*\sO_X$ is
%% not of finite type. Let $\sA$ be the fraction field of $Y$. Then
%% $\widetilde{f_*}\sO_X = f_*\sO_X$ is not of finite type.
%%
%% Faithfully flat example? Pure example??
\end{remark}

\begin{remark}\label{R:connected-fibration}
Let $f\colon X\to Y$ be a morphism of finite presentation between algebraic
stacks. If $f$ is smooth, or more generally flat with geometrically reduced
fibers, then there is a canonical factorization $X\to \pi_0(X/Y)\to Y$ where
$X\to \pi_0(X/Y)$ has geometrically connected fibers and $\pi_0(X/Y)\to Y$ is
\'etale and representable, but not necessarily
separated. See~\cite[6.8]{laumon} for $f$ smooth and representable
and~\cite[Thm.~2.5.2]{romagny_components-in-families} for the general case.
When $f$ is smooth, the morphism $X\to \pi_0(X/Y)$ is smooth with geometrically
connected fibers, hence pure~\cite[Ex.~3.3.4 (iii)]{raynaud-gruson}.
\end{remark}

\begin{theorem}\label{T:existence-of-adjoint}
Let $f\colon X\to Y$ be a faithfully flat morphism of finite presentation
between quasi-compact and quasi-separated algebraic stacks.
If $f$ is smooth with geometrically connected fibers,
then $f^*\colon \APP_\Lambda(Y)\to
\APP_\Lambda(X)$ admits a right adjoint $f_*$. Moreover, $f_*$ commutes with
base change along flat morphisms $g\colon Y'\to Y$ of finite presentation.
\end{theorem}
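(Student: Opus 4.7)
My plan is to first construct $f_*$ in the special case where $Y$ admits an approximation, writing down an explicit formula on the subalgebra side via Theorem~\ref{T:left-adjoint-for-subalg}, and then to handle the general case by a smooth-local construction combined with flat descent (Theorem~\ref{T:descent}). For the approximated case, let $h\colon Y\to Y_0$ be an affine schematically dominant morphism to an algebraic stack $Y_0$ of finite presentation over $\ZZ$ and set $A=h_*\sO_Y$. Standard limit arguments descend $f$ to a smooth morphism $f_0\colon X_0\to Y_0$ of finite presentation with geometrically connected fibers, so that $X=X_0\times_{Y_0}Y$ and $X\to X_0$ is an affine schematically dominant approximation of $X$ with $(X\to X_0)_*\sO_X=f_0^*A$ by flat base change. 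Corollary~\ref{C:approximations=colimits-of-subalgebras} then identifies $\APP_\Lambda(Y)^\op\simeq\APP_\Lambda(A)$ and $\APP_\Lambda(X)^\op\simeq\APP_\Lambda(f_0^*A)$, with $f^*$ corresponding to $\widetilde{f_0^*}$. Since $f_0$ is smooth with geometrically connected fibers it is pure (Remark~\ref{R:connected-fibration}), so Theorem~\ref{T:left-adjoint-for-subalg} provides a left adjoint $f_{0,!}^{\alg}$ on subalgebras that preserves the finite-type property and is compatible with flat base change; I would define $f_*$ by applying $f_{0,!}^{\alg}$ termwise to a colimit diagram $\{B_\lambda\}$, and the unit inclusion $B_\lambda\subseteq f_0^*(f_{0,!}^{\alg}B_\lambda)$ combined with $\bigcup B_\lambda=f_0^*A$ and faithful flatness of $f_0$ forces $\bigcup f_{0,!}^{\alg}B_\lambda=A$, so the result genuinely lies in $\APP_\Lambda(A)$ and the adjunction is inherited termwise.

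For the general case I would choose a smooth surjection $p\colon U\to Y$ with $U$ an affine scheme and set $U_n=U\times_Y\cdots\times_Y U$ ($n$ factors) for $n=1,2,3$; each $U_n$ is quasi-compact and quasi-separated of finite presentation over $U$, hence approximable, so the previous paragraph furnishes right adjoints $(f_n)_*$ for the base changes $f_n$ of $f$ to $U_n$, and the flat base change portion of Theorem~\ref{T:left-adjoint-for-subalg} makes these compatible under the face maps $U_3\rightrightarrows U_2\rightrightarrows U_1$. Given $\{X_\lambda\}\in\APP_\Lambda(X)$, the family $\{U_\lambda\}:=(f_1)_*(p^*\{X_\lambda\})$ then constitutes descent data in $\APP_\Lambda(U)$, and Theorem~\ref{T:descent} produces a unique $\{Y_\lambda\}\in\APP_\Lambda(Y)$ with $p^*\{Y_\lambda\}=\{U_\lambda\}$; I set $f_*\{X_\lambda\}:=\{Y_\lambda\}$. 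The adjunction can be checked after pulling back along the order-reflecting map $\APP_\Lambda(Y)\to\APP_\Lambda(U)$, where it reduces to the adjunction for $(f_1)_*$, and flat base change of $f_*$ along $g\colon\stY'\to Y$ of finite presentation follows analogously by choosing a smooth atlas of $\stY'$ dominating $U$ and invoking the flat base change for $f_{0,!}^{\alg}$.

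The hard part will be the passage from $f_{0,!}^{\alg}$ on a single subalgebra to an honest endofunctor of $\APP_\Lambda$: one must verify that the termwise construction still produces a colimit diagram whose colimit is all of $A$ and not merely a subalgebra of $A$, and that the resulting families on $U_1,U_2,U_3$ really satisfy the effective descent condition of Theorem~\ref{T:descent}. Both hinge on the adjunction unit together with the flat-base-change part of Theorem~\ref{T:left-adjoint-for-subalg}.
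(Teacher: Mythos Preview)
Your proposal is correct and follows essentially the same two-step strategy as the paper: first construct the adjoint explicitly when $Y$ has an approximation by applying $(f_0)^{\alg}_!$ termwise on the subalgebra side (verifying the colimit is $A$ via faithful flatness, just as the paper does), then descend to the general case using Theorem~\ref{T:descent}. One minor notational slip: when you write $p^*\{X_\lambda\}$ you mean the pullback along the base change $X\times_Y U\to X$ of $p$, not along $p$ itself; the paper calls this map $g'$ and writes $f'_*g'^*$.
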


\begin{proof}
We first prove the theorem when $Y$ has an approximation, that is, when there
exists an affine schematically dominant morphism $h\colon Y\to Y_0$ with $Y_0$
of
finite presentation over $\Spec \ZZ$. We can arrange so that $f$ descends to a
smooth surjective morphism $f_0\colon X_0\to Y_0$ of finite
presentation~\cite[Prop.~B.3]{rydh_noetherian-approx}. We can also arrange so
that $f_0$ has geometrically connected fibers, e.g., using that we have a
factorization $X_0\to \pi_0(X_0/Y_0)\to Y_0$ which commutes with base change
(cf.\ Remark~\ref{R:connected-fibration}). Then $f_0$ is pure.

The preordered set $\APP_\Lambda(Y)^\op$ can be identified with the category of
colimit diagrams $\APP_\Lambda(A)$ of finitely generated $\sO_{Y_0}$-algebras
of almost shape $\Lambda$ and colimit $A=h_*\sO_Y$
(Corollary~\ref{C:approximations=colimits-of-subalgebras}). We have a similar
identification for $\APP_\Lambda(X)$ and $f^*$ takes an object
$\{A_\lambda\}\in \APP_\Lambda(A)$ to $\{f_0^*A_\lambda\}\in
\APP_\Lambda(f_0^*A)$. I claim that $f_!\colon \APP_\Lambda(f_0^*A)\to
\APP_\Lambda(A)$ given by $f_!(\{B_\lambda\})=\{(f_0)^\alg_!B_\lambda\}$ is a
left adjoint. This follows immediately from
Theorem~\ref{T:left-adjoint-for-subalg} except that we have to verify that
$\colim_\lambda (f_0)^\alg_!B_\Lambda=A$. Since $(f_0)^\alg_!$ is a left
adjoint, it commutes with colimits so $\colim_\lambda (f_0)^\alg_!B_\Lambda =
(f_0)^\alg_!f_0^*A$. Since $f_0$ is faithfully flat and
$f_0^*A\subseteq f_0^*(f_0)^\alg_!f_0^*A \subseteq f_0^*A$, it follows that
$(f_0)^\alg_!f_0^*A=A$.

Now, drop the assumption on $Y$. Let $g\colon Y'\to Y$ be a faithfully flat
morphism of finite presentation from an affine scheme $Y'$ and let $f'\colon
X'\to Y'$ be the base change of $f$ and $g'\colon X'\to X$ be the base change
of $g$. Then by the special case, $f'_*$ exists and commutes with flat base
change on $Y'$. We may now define $f_*$ by descending $f'_*g'^*$ along $g$
(Theorem~\ref{T:descent}) so that $g^*f_*=f'_*g'^*$ holds by definition. Since
$g^*$ and $g'^*$ are order-reflecting and $(f'^*,f'_*)$ is an adjunction, it
follows that $(f^*,f_*)$ is an adjunction.
\end{proof}

We can now prove the main theorem of the paper. Recall that it states that
an algebraic stack $X$ is quasi-compact and quasi-separated if and only if
it has an approximation, or equivalently, if and only if
$\APP_\Lambda(X)\neq \emptyset$ (Remark~\ref{R:approximation-vs-APP-nonempty}).

\begin{proof}[Proof of Theorem~\tref{T:MT}]
If $X$ has an approximation, then it is quasi-compact and quasi-separated by
definition. Conversely, if $X$ is quasi-compact and quasi-separated,
let $p\colon U\to X$ be a smooth presentation with $U$ affine. Then by
Remark~\ref{R:connected-fibration} we have a
factorization $p=gf\colon U\to X':=\pi_0(U/X)\to X$ where $f$ is smooth with
geometrically connected fibers and $g$ is \'etale and representable.
Since $U=\Spec A$ is affine, it can be approximated by $\Spec \ZZ$.
If $\Lambda$ is the partially ordered set of finitely
generated $\ZZ$-subalgebras of $A$, then we have a canonical element
$\{U_\lambda\}_{\lambda\in \Lambda}$ of $\APP_\Lambda(U)$. By
Theorem~\ref{T:existence-of-adjoint}
we have an element $f_*\{U_\lambda\}\in \APP_\Lambda(X')$. In particular,
$X'$ has an approximation. By definition, this means that
$X$ is of approximation type~\cite[Def.~2.9]{rydh_noetherian-approx} and hence
also has an approximation~\cite[Thm.~7.10]{rydh_noetherian-approx}.
\end{proof}

\end{section}

%%%%%%%%%%%%%%%%%%%%%%%%%%%%%%%%%%%%%%%%%%%%%%%%%%%%%%%%%%%%%%%%%%%%%%%%%%

\begin{section}{Applications}\label{S:applications}

\subsection{Algebraicity of moduli spaces and stacks}
Algebraicity results for stacks with finite diagonals were obtained
in~\cite[Thms.~A~(i) \& B]{hall-rydh_gen-hilb-quot} without assuming
locally of finite presentation. With the new approximation result we
can drop the assumption on the diagonal.

\begin{theorem}
Let $f\colon X\to S$ be a separated morphism of algebraic stacks.
\begin{enumerate}
\item The Hilbert functor $\Hilb_{X/S}$ is an algebraic space, separated over
  $S$.
\item The stack $\Coh(X/S)$ is algebraic and has affine diagonal (relative to
  $S$).
\item If $\sF\in\QCoh(X)$, then the functor $\Quot(X/S,\sF)$ is representable
and separated over $S$.
\item The Hilbert stack $\HilbSt^{\qfin}_{X/S}$ is algebraic and has affine
  diagonal.
\end{enumerate}
\end{theorem}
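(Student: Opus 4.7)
The plan is to reduce all four statements to existing algebraicity theorems of Hall--Rydh \cite{hall-rydh_gen-hilb-quot,hall-rydh_coherent-tannaka-duality}. Those theorems already establish (i)--(iv) for any morphism $f\colon X\to S$ that is separated and, in addition, \emph{locally of approximation type}. The extra approximation hypothesis was previously verifiable only under the assumption that $f$ be locally of finite presentation or that the diagonal of $X/S$ be finite. The new input from the present paper is that Theorem~\ref{T:MT} makes ``locally of approximation type'' automatic for any quasi-separated morphism of algebraic stacks.

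First I would work locally on $S$. All four assertions are local on $S$ for the Zariski topology (and in fact fppf-locally), so one may assume that $S$ is affine; in particular $f$ is quasi-compact in addition to being separated, hence quasi-compact and quasi-separated. By the discussion immediately following Corollary~\ref{C:MT-limit}, Theorem~\ref{T:MT} implies that every quasi-separated morphism of algebraic stacks is locally of approximation type in the sense of \cite[\S1]{hall-rydh_gen-hilb-quot}. In particular the separated morphism $f$ enjoys this property without any hypothesis on its diagonal.

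I would then feed $f$ into the Hall--Rydh machinery. Statements (i) and (iv) become \cite[Thm.~A(i), Thm.~B]{hall-rydh_gen-hilb-quot} applied to a separated morphism that is locally of approximation type (the finite-diagonal hypothesis in loc.\ cit.\ is used only to extract the approximation property, so once the latter is known it can be dropped). Statement (ii), algebraicity of $\Coh(X/S)$ with affine diagonal, follows from the corresponding version of the main theorem of \cite{hall-rydh_coherent-tannaka-duality} in the approximation-type setting. Finally, (iii) reduces to (ii) by identifying $\Quot(X/S,\sF)$ with the locally closed subfunctor of $\Coh(X/S)$ that parametrizes quotients of $\sF$, exactly as in \cite[proof of Thm.~A(iii)]{hall-rydh_gen-hilb-quot}.

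The main obstacle in the earlier work was the need to impose a condition on the diagonal (or locally of finite presentation) purely in order to secure the approximation property; with Theorem~\ref{T:MT} that obstacle disappears, so no new deformation-theoretic or limit-preservation argument is required and the existing verifications of Artin's criteria apply verbatim.
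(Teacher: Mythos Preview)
Your approach is correct and matches the paper's: use Theorem~\ref{T:MT} (via the remark after Corollary~\ref{C:MT-limit}) to conclude that the separated morphism $f$ is locally of approximation type, then invoke the Hall--Rydh algebraicity theorems that already assume this hypothesis. Two small corrections: the reduction to affine $S$ is unnecessary, and your intermediate claim that $f$ then becomes quasi-compact is false (a separated morphism to an affine base need not be quasi-compact---think of an infinite disjoint union of copies of $S$), though fortunately you never use it; also, the paper cites \cite[Thm.~4.4]{hall-rydh_gen-hilb-quot} for all of (i)--(iii), noting $\Hilb_{X/S}=\Quot(X/S,\sO_X)$, and \cite[Thm.~2.2]{hall-rydh_gen-hilb-quot} for (iv), whereas your citation of \cite{hall-rydh_coherent-tannaka-duality} for $\Coh(X/S)$ is misplaced---that paper handles Hom-stacks and Weil restrictions, not $\Coh$.
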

\begin{proof}
By the main theorem, $f$ is locally of approximation type.
The algebraicity of $\Coh(X/S)$, $\Quot(X/S,\sF)$ and
$\Hilb_{X/S}=\Quot(X/S,\sO_X)$ thus follows
from~\cite[Thm.~4.4]{hall-rydh_gen-hilb-quot} and the algebraicity of
$\HilbSt^{\qfin}_{X/S}$ follows from~\cite[Thm.~2.2]{hall-rydh_gen-hilb-quot}.
\end{proof}

Similarly, we obtain the following strengthening of~\cite[Thms.~1.2 \&
  1.3]{hall-rydh_coherent-tannaka-duality} on the algebraicity of Hom-stacks
and Weil restrictions. Recall that a morphism $X\to Y$ has \emph{affine
  stabilizers} if the following equivalent conditions hold:
\begin{enumerate}
\item the diagonal $\Delta_{X/Y}$ has affine fibers;
\item the inertia stack $I_{X/Y}$ has affine fibers; and
\item for any field $k$ and point $x\colon \Spec k\to X$, the
automorphism group scheme $\Aut(x)$ is affine.
\end{enumerate}

\begin{theorem}
Let $S$ be an algebraic stack. Let $f\colon Z\to S$ be a proper and flat
morphism of finite presentation.
\begin{enumerate}
\item If $X\to S$ is a quasi-separated morphism with affine stabilizers, then
  the stack
  \[
  \underline{\Hom}_S(Z,X)\colon T\mapsto \Hom_S(Z\times_S T,X)
  \]
  is algebraic and $\underline{\Hom}_S(Z,X)\to S$ is quasi-separated with
  affine stabilizers. If $X\to S$ has affine/quasi-affine/separated diagonal,
  then so has $\underline{\Hom}_S(Z,X)\to S$.
\item If $X\to Z$ is a quasi-separated morphism such that $X\to Z\to S$ has
  affine stabilizers, then the Weil restriction
  \[
  f_*X=\weilr_{Z/S}(X)\colon T\mapsto \Hom_Z(Z\times_S T,X)
  \]
  is algebraic and $f_*X\to S$ is quasi-separated with affine stabilizers. If
  $X\to Z$ has affine/quasi-affine/separated diagonal, then so has $f_*X\to S$.
\end{enumerate}
\end{theorem}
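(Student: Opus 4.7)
The plan is to reduce both statements to the already proved algebraicity results \cite[Thms.~1.2 \& 1.3]{hall-rydh_coherent-tannaka-duality}, following the exact template used in the proof of the preceding theorem on Hilbert and Quot. The crucial input is the observation recorded immediately after Corollary~\tref{C:MT-limit}: every quasi-separated morphism of algebraic stacks is \emph{locally of approximation type}. Thus the hypothesis ``$X\to S$ quasi-separated with affine stabilizers'' in (i) upgrades for free to ``locally of approximation type with affine stabilizers'', which is precisely the form under which the Hom-algebraicity theorem of hall-rydh is proved. The identical remark applies to the morphism $X\to Z$ in (ii). Consequently, algebraicity, quasi-separatedness, and affineness of stabilizers for $\underline{\Hom}_S(Z,X)\to S$ and $\weilr_{Z/S}(X)\to S$ are immediate.

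To transfer the diagonal conditions, I would use the standard identification of the diagonal of a Hom-stack with a Hom-stack of the diagonal. Concretely, two $S$-morphisms $Z_T\to X_T$ together with an isomorphism between them correspond to a map $Z_T\to I_{X/S}$, and a section of the diagonal $\underline{\Hom}_S(Z,X)\to \underline{\Hom}_S(Z,X)\times_S \underline{\Hom}_S(Z,X)$ corresponds to a map to $\underline{\Hom}_S(Z,\Delta_{X/S})$ in the evident fiber product sense. Thus if $\Delta_{X/S}$ is affine (respectively quasi-affine, separated), applying the Hom-algebraicity theorem again to $\Delta_{X/S}$ shows that the diagonal of $\underline{\Hom}_S(Z,X)/S$ inherits the same property. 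The Weil-restriction case is handled identically, replacing $\Delta_{X/S}$ by $\Delta_{X/Z}$ and invoking \cite[Thm.~1.3]{hall-rydh_coherent-tannaka-duality} in place of \cite[Thm.~1.2]{hall-rydh_coherent-tannaka-duality}.

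The main obstacle is little more than bookkeeping: one must confirm that the theorems in \cite{hall-rydh_coherent-tannaka-duality} really do accept ``locally of approximation type'' as a hypothesis rather than the stricter ``locally of finite presentation''. Inspection of their arguments shows only local approximation type is used; alternatively, one may reduce directly to the finitely presented case by writing $X\to S$ (respectively $X\to Z$) as an affine limit of finitely presented morphisms via Corollary~\tref{C:MT-limit}, and noting that the formation of $\underline{\Hom}$ and $\weilr$ commutes with such limits since $Z\to S$ is of finite presentation.
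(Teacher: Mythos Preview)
Your approach is essentially the paper's: observe that the main theorem makes $X\to S$ (resp.\ $X\to Z$) locally of approximation type, then invoke the results of \cite{hall-rydh_coherent-tannaka-duality}. The only difference is in bookkeeping: the paper cites \cite[Cor.~9.2]{hall-rydh_coherent-tannaka-duality} rather than Thms.~1.2 and~1.3, and that corollary already packages the full statement---algebraicity, quasi-separatedness, affine stabilizers, and the inheritance of affine/quasi-affine/separated diagonal---under the hypothesis ``locally of approximation type''. So your separate argument for the diagonal conditions, and your worry about whether Thms.~1.2/1.3 accept this hypothesis, are both rendered moot by citing the right corollary; your sketch is effectively re-deriving part of Cor.~9.2.

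One small caution on your alternative limit argument: writing $X=\varprojlim X_\lambda$ and using $\underline{\Hom}_S(Z,\varprojlim X_\lambda)=\varprojlim \underline{\Hom}_S(Z,X_\lambda)$ is fine as stacks, but to apply Thms.~1.2/1.3 at each finite level you would also need the $X_\lambda$ to have affine stabilizers, which requires an extra step (e.g.\ \cite[Thm.~D]{rydh_noetherian-approx}). This is doable but is exactly the kind of detail that Cor.~9.2 has already absorbed.
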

\begin{proof}
By the main theorem, $X\to S$ is locally of approximation type. The result now
follows from~\cite[Cor.~9.2]{hall-rydh_coherent-tannaka-duality}.
\end{proof}

\subsection{Zariski's Main Theorem}
We obtain the following version of Zariski's Main Theorem, slightly
generalizing~\cite[Thm.~8.1]{rydh_approximation-sheaves}.

\begin{theorem}
Let $X\to S$ be a representable, quasi-finite and separated morphism.
If $S$ is quasi-compact and quasi-separated, then there exists a factorization
$X\to X'\to S$ where $X\to X'$ is an open immersion and $X'\to S$ is finite.
If in addition $X\to S$ is of finite presentation, we can arrange so that
$X'\to S$ also is of finite presentation.
\end{theorem}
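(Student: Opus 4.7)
The plan is to reduce to the finitely presented case handled in \cite[Thm.~8.1]{rydh_approximation-sheaves} using the approximation machinery developed in this paper. Since $f\colon X\to S$ is representable, quasi-finite, and separated, it is in particular of finite type, so Corollary~\tref{C:MT-limit} lets us write $X = \varprojlim_\lambda X_\lambda$ as an inverse limit over $S$ with each $X_\lambda \to S$ of finite presentation and every transition map $X_\mu \to X_\lambda$ a closed immersion.

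For $\lambda$ sufficiently large, standard spreading-out arguments then show that $X_\lambda\to S$ is itself representable, quasi-finite, and separated: representability and separatedness follow by transferring the properties of $\Delta_{X/S}$ (a monomorphism, respectively a closed immersion) back through the compatible system of diagonals $\Delta_{X_\lambda/S}$, while quasi-finiteness follows from the openness of the quasi-finite locus of a finitely presented morphism. Applying the finitely presented Zariski Main Theorem to $X_\lambda\to S$ then yields a factorization $X_\lambda \hookrightarrow X'_\lambda \to S$ in which the first map is an open immersion and $X'_\lambda\to S$ is finite and of finite presentation. The composite $X \hookrightarrow X_\lambda \hookrightarrow X'_\lambda$ is a locally closed immersion, and I would take $X'$ to be its scheme-theoretic closure inside $X'_\lambda$: then $X \hookrightarrow X'$ is an open immersion (since a locally closed subscheme is open in its closure) and $X'\to S$ is finite (as $X'$ is a closed subscheme of the finite $X'_\lambda$). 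For the addendum, if $f$ is already of finite presentation one can take $X_\lambda=X$ directly, so $X'=X'_\lambda$ is of finite presentation.

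The main obstacle I anticipate is the rigorous verification that representability spreads out to the approximants $X_\lambda$, since they are a priori algebraic stacks even though the inverse limit $X$ is representable; one must invoke the principle that a monomorphism in a finitely presented inverse system descends to some finite stage, so that $\Delta_{X_\lambda/S}$ becomes a monomorphism for $\lambda$ large enough once $\Delta_{X/S}$ is. The remainder is a straightforward manipulation of locally closed immersions.
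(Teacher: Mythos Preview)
Your approach is correct but considerably more roundabout than the paper's. The paper gives a two-line proof: by Corollary~\tref{C:COMPLETE} the base $S$ is pseudo-noetherian, and then \cite[Thm.~8.6~(ii)]{rydh_noetherian-approx} already contains Zariski's Main Theorem for arbitrary (not necessarily finitely presented) representable quasi-finite separated morphisms over a pseudo-noetherian base. Your argument --- approximating $X$ by finitely presented $X_\lambda$ via Corollary~\tref{C:MT-limit}, spreading out the hypotheses, applying ZMT at finite level, and taking the scheme-theoretic closure --- essentially re-derives by hand the passage from the finitely presented case to the general case that is already packaged inside that cited result. What your route buys is explicitness; what the paper's buys is brevity, since the heavy lifting was done in \cite{rydh_noetherian-approx} once pseudo-noetherianness is known.

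One small correction: your justification for quasi-finiteness of $X_\lambda\to S$ via ``openness of the quasi-finite locus'' is not right as stated. The quasi-finite locus is open in $X_\lambda$, but $X$ need not land in it, since a closed subscheme of a non-quasi-finite fiber can perfectly well be finite. The spreading out of representability, separatedness, and quasi-finiteness is instead the standard limit result \cite[Prop.~B.3]{rydh_noetherian-approx} applied with target $Y=S$; your diagonal argument for the first two is fine, but for quasi-finiteness you should just cite that proposition rather than invent a fiberwise argument.
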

\begin{proof}
By the main theorem, $S$ is pseudo-noetherian (Corollary~\tref{C:COMPLETE}).
The result is thus \cite[Thm.~8.6 (ii)]{rydh_noetherian-approx}.
\end{proof}

%% \subsection{Finite type morphisms}
%% If $f\colon X\to S$ is a morphism of finite type between quasi-compact and
%% quasi-separated algebraic stacks, then Corollary~\ref{C:MT-limit} says that
%% we have a factorization $X\inj X_0\to S$ where the first morphism is a
%% closed immersion and the second morphism is of finite presentation. In this
%% factorization $X\inj X_0$ can be chosen as a nil-immersion if and only if
%% $f$ is of constructible finite
%% type~\cite[Prop.~8.11]{rydh_noetherian-approx}.  We also have the following
%% result:
%%
%% \begin{proposition}
%% Let $f\colon X\to S$ be a morphism between quasi-compact and quasi-separated
%% algebraic stacks. Then $f$ is unramified if and only if $f$ factors as a
%% closed immersion followed by an \'etale morphism of finite presentation.
%% \end{proposition}
%% \begin{proof}
%% We can arrange so that $X_0\to S$ is
%% unramified~\cite[Thm.~D]{rydh_noetherian-approx}. Then there is a
%% \emph{canonical} factorization $X_0\inj X_1\to S$ where the first morphism
%% is a closed immersion and the second morphism is \'etale and of finite
%% presentation --- the \'etale envelope of
%% $X_1\to S$~\cite{rydh_embeddings-of-unramified}.
%% \end{proof}

\subsection{Elimination of noetherian hypothesis in Chow's Lemma}
We can also remove the noetherian assumption of the main result
of~\cite{olsson_proper-coverings}:

\begin{theorem}\label{T:proper-covering}
Let $X$ be a quasi-compact separated algebraic stack. Then there exists
a proper surjective morphism $X'\to X$ where $X'$ is a separated scheme
which admits an ample line bundle.
\end{theorem}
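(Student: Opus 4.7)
The plan is to reduce to Olsson's Chow lemma for noetherian separated stacks via absolute approximation and base change. First, applying Corollary~\ref{C:MT-limit} to the structure morphism $X\to \Spec \ZZ$, I would write
\[
X = \lim_{\lambda} X_\lambda
\]
as an inverse limit with affine transition morphisms, where each $X_\lambda$ is of finite presentation (hence noetherian) over $\Spec \ZZ$. Since $X$ is separated, its diagonal $\Delta_X$ is proper; as $\Delta_X$ is the limit of the finitely presented morphisms $\Delta_{X_\lambda}$, standard limit arguments for descending properness along an affine inverse system of finitely presented morphisms imply that $\Delta_{X_\lambda}$ is proper for all $\lambda$ sufficiently large. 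I may therefore fix one index $\mu$ with $X_\mu$ quasi-compact, noetherian, and separated.

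Next I would apply Olsson's theorem~\cite{olsson_proper-coverings} to $X_\mu$ to obtain a proper surjective morphism $g\colon Y\to X_\mu$ with $Y$ a separated scheme carrying an ample line bundle $L$. Setting $X' := Y\times_{X_\mu} X$ with projections $f\colon X'\to Y$ and $h\colon X'\to X$, the morphism $h$ is a base change of $g$ and is therefore proper and surjective, while $f$ is a base change of $X\to X_\mu$ and is therefore affine. Consequently $X'$ is a scheme (affine over the scheme $Y$) and is separated (being affine over the separated scheme $Y$).

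For the final condition, I claim $f^*L$ is ample on the qcqs scheme $X'$. Given $x'\in X'$ with image $y=f(x')$, ampleness of $L$ on $Y$ supplies some $n\geq 1$ and a section $\sigma\in \Gamma(Y, L^{\otimes n})$ for which the non-vanishing locus $Y_\sigma$ is an affine open neighbourhood of $y$; the preimage $X'_{f^*\sigma} = f^{-1}(Y_\sigma)$ is then affine because $f$ is affine. These distinguished opens of powers of $f^*L$ therefore yield an affine open cover of $X'$, so $f^*L$ is ample.

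The main technical obstacle I anticipate is the first step, descending separatedness from $X$ to some noetherian approximation $X_\mu$: Corollary~\ref{C:MT-limit} only directly supplies a limit system of finitely presented morphisms over $\Spec \ZZ$, and one must still invoke the limit principle that properness of a finitely presented morphism (here the diagonal) descends to the terms of such an inverse system. Once that is in hand, the remainder is a formal base-change argument combined with Olsson's theorem and the elementary observation that pullbacks of ample line bundles under affine morphisms of schemes remain ample.
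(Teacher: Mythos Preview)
Your argument is correct and follows essentially the same route as the paper's proof: approximate $X$ by a separated noetherian stack, apply Olsson's theorem there, and pull back along the affine map $X\to X_\mu$. The paper is simply terser, citing \cite[Thm.~D]{rydh_noetherian-approx} directly for the existence of a \emph{separated} approximation (rather than descending properness of the diagonal by hand), and it omits your verification that the pulled-back line bundle is ample---but your added detail is correct and the approaches are otherwise identical.
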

\begin{proof}
Choose an approximation $X\to X_0$, that is, $X_0$ is of finite presentation
over $\Spec \ZZ$ and $X\to X_0$ is affine. We can also assume that $X_0$
is separated~\cite[Thm.~D]{rydh_noetherian-approx}.
By~\cite{olsson_proper-coverings}
there exists a proper surjective morphism $X'_0\to X_0$ where $X'_0$ is a
quasi-projective scheme. We can now take $X':=X'_0\times_{X_0} X$.
\end{proof}

\subsection{Approximation of proper morphisms}
\begin{corollary}
Let $S$ be a quasi-compact algebraic stack and let $X=\varprojlim_\lambda
X_\lambda$ be an inverse limit of finitely presented $S$-stacks such that
$X_\mu\to X_\lambda$ is a \emph{closed immersion} for every $\mu\geq
\lambda$. If $X\to S$ is proper, then so is $X_\lambda\to S$ for all
sufficiently large $\lambda$.
\end{corollary}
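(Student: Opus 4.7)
This is the stack-theoretic version of the classical limit theorem for proper morphisms (cf.\ EGA~IV${}_3$, Cor.~8.10.5~(xii)). My strategy is noetherian reduction using the main theorem.

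Since properness is fpqc-local on the target, I may base change along a smooth affine presentation of $S$ to assume $S = \Spec R$ is affine. Write $R = \colim_i R_i$ with each $R_i$ of finite type over $\ZZ$, so $S = \lim_i S_i$ with $S_i$ noetherian and transitions affine and schematically dominant. Fix $\lambda_0 \in \Lambda$; by standard spreading out~\cite[Prop.~B.3]{rydh_noetherian-approx}, descend $X_{\lambda_0} \to S$ to a finitely presented $X^{(i_0)}_{\lambda_0} \to S_{i_0}$, giving noetherian approximations $X^{(i)}_{\lambda_0} := X^{(i_0)}_{\lambda_0} \times_{S_{i_0}} S_i$ for every $i \geq i_0$.

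When $S$ is itself noetherian, the argument is short: $X_{\lambda_0}$ is noetherian, the closed immersions $X_\mu \hookrightarrow X_{\lambda_0}$ correspond to an ascending chain of finite-type ideal sheaves $\{J_\mu\}$ on $X_{\lambda_0}$ whose colimit is the ideal $J$ cutting out $X$, and noetherian ACC forces $J_\mu = J$ for all sufficiently large $\mu$; hence $X_\mu = X$ is proper over $S$.

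The main obstacle in the general case is that each finite-type ideal $J_\mu \subseteq \mathcal{O}_{X_{\lambda_0}}$ descends to a noetherian approximation $X^{(i_\mu)}_{\lambda_0}$ at a level $i_\mu$ depending on $\mu$, so noetherian ACC does not apply directly to the chain on $X_{\lambda_0}$. I would handle this by verifying each constituent of properness --- separatedness, finite type, and universal closedness --- after spreading to a common noetherian level, using the classical limit theorems for algebraic spaces extended to algebraic stacks via the main theorem: separatedness of $X_\mu/S$ descends to separatedness of $X^{(i)}_\mu/S_i$ by~\cite[Prop.~B.3]{rydh_noetherian-approx}, and universal closedness follows from the valuative criterion applied on the noetherian $X^{(i)}_{\lambda_0}$, using that any trait-lift $\Spec V \to X$ automatically factors through $X_\mu$ for all $\mu$ by virtue of the identification $X = \varprojlim_\mu X_\mu$ as closed substacks of $X_{\lambda_0}$. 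Once $X_{\mu_0} \to S$ is shown proper for some $\mu_0$, the stronger conclusion ``for all sufficiently large $\lambda$'' follows automatically: for any $\mu \geq \mu_0$, the map $X_\mu \to S$ factors as the closed immersion $X_\mu \hookrightarrow X_{\mu_0}$ followed by the proper morphism $X_{\mu_0} \to S$, hence is proper.
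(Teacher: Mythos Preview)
Your noetherian case is fine and in fact proves more than needed (the system literally stabilizes). The gap is in the general case, specifically in the universal closedness step.

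To verify the valuative criterion for $X_\mu \to S$, you must start from an arbitrary $K$-point $\Spec K \to X_\mu$ together with $\Spec V \to S$, and produce a lift $\Spec V \to X_\mu$. You invoke properness of $X \to S$, but that only lets you lift $K$-points of $X$. Since $X \hookrightarrow X_\mu$ is a closed immersion, a $K$-point of $X_\mu$ has no reason to land in $X$; indeed, in the non-noetherian situation the inclusion $|X| \subset |X_\mu|$ can be strict for every $\mu$. Your sentence ``any trait-lift $\Spec V \to X$ automatically factors through $X_\mu$'' is true but irrelevant: the problem is not getting from $X$ to $X_\mu$, it is getting from $X_\mu$ to $X$ on the generic point so that the valuative criterion for $X$ becomes applicable. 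This is exactly the obstruction you identified earlier (``$i_\mu$ depends on $\mu$'') reappearing in disguise. The separatedness step is also stated in the wrong direction: you write that separatedness of $X_\mu/S$ \emph{descends} to $X_\mu^{(i)}/S_i$, but separatedness of $X_\mu/S$ is part of what must be established.

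The paper's proof avoids the valuative criterion entirely. It applies the non-noetherian Chow's lemma (Theorem~\ref{T:proper-covering}) to produce a proper surjection $X' \to X$ with $X'$ a separated scheme admitting an ample line bundle, and then runs the argument of \cite[Cor.~6.6]{rydh_noetherian-approx} with \cite[Cor.~6.7]{rydh_noetherian-approx} in place of \cite[Cor.~6.5 and Thm.~B]{rydh_noetherian-approx}. The point is that for such $X'$ properness is accessible via the ample bundle (closedness in a projective embedding), which \emph{does} spread out to a fixed noetherian level, and properness then descends along the proper surjection $X'_\lambda \to X_\lambda$. Your final reduction (``once $X_{\mu_0} \to S$ is proper for one $\mu_0$, all larger $\mu$ follow'') is correct and is also used implicitly in the paper's argument.
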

\begin{proof}
This follows as in the proof
of~\cite[Cor.~6.6]{rydh_noetherian-approx}, replacing the use of
\cite[Cor.~6.5 and Thm.~B]{rydh_noetherian-approx} with
\cite[Cor.~6.7]{rydh_noetherian-approx} and Theorem~\ref{T:proper-covering}.
\end{proof}

As a consequence, we can add the property \emph{proper} (not necessarily with
finite diagonal) to the list (PC) figuring
in~\cite[Thms.~C and~D]{rydh_noetherian-approx}.

\subsection{Applications to good moduli spaces}
Let $\stX$ be an algebraic stack. A \emph{good moduli space} for $\stX$
\cite{alper_good-mod-spaces}, \cite[1.7.3]{alper-hall-rydh_etale-local-stacks}
is an algebraic space $X$ together with a map $\pi\colon \stX\to X$
such that
\begin{enumerate}
\item $\pi$ is quasi-compact and quasi-separated,
\item $\pi_*\colon \QCoh(\stX)\to \QCoh(X)$ is exact, also after arbitrary base
  change $X'\to X$, and
\item the unit $\sO_X \to \pi_*\sO_\stX$ is an isomorphism.
\end{enumerate}
If a good moduli space exists, then it is unique
\cite[Thm.~3.12]{alper-hall-rydh_etale-local-stacks}.
The basic examples of good moduli spaces are
\begin{enumerate}
\item the GIT quotient $[\Spec A/G]\to \Spec A^G$ where $G$ is a
\emph{linearly reductive} group acting on an affine scheme $A$.
\item the GIT quotient $[X^{ss}(L)/G]\to X \gitq G$ where $G$ is a
\emph{linearly reductive} group acting on a polarized projective scheme
$(X,L)$.
\end{enumerate}
%% Usually, $\pi$ will have affine diagonal.
There is also a notion of \emph{adequate} moduli space which is equivalent to
good moduli space in characteristic zero but allows for arbitrary reductive
group actions in positive characteristic. If $\pi\colon \stX\to X$ is a good
(resp.\ adequate) moduli space, then $\pi$ is universally closed and every fiber
$\pi^{-1}(x)$ has a unique closed point $x_0$ which has linearly reductive
(resp.\ reductive) stabilizer.

A stack $\stX$ is \emph{fundamental} if it is of the form $[\Spec A / \GL_n]$
for some ring $A$ and $n\in \NN$. Then $\pi\colon \stX\to
\Spec(\stX,\sO_\stX)=\Spec A^{\GL_n}$ is an adequate moduli space. A stack
$\stX$ is \emph{linearly fundamental} if it is fundamental and $\pi$ is a good
moduli space. Equivalently, $\stX$ is fundamental (resp.\ linearly fundamental)
if it has an affine adequate (resp.\ good) moduli space, affine
stabilizers and the resolution property.

\subsubsection{\'Etale-local structure of good moduli spaces}
The following results generalize \cite[Thm.~6.4, Thm.~6.1, Cor.~6.11 and
Prop.~6.14]{alper-hall-rydh_etale-local-stacks} by removing the assumption
that $\stX$ is of finite presentation over some algebraic space.

\begin{theorem}\label{T:adequate-local-structure}
Let $\pi\colon \stX\to X$ be an adequate moduli space with $X$ quasi-separated.
Let $x\in |X|$ be a point. Assume that $\stX$ has affine stabilizers and
separated diagonal and that the unique closed point $x_0\in \pi^{-1}(x)$ has
linearly reductive stabilizer. Then there exists an \'etale neighborhood
$(X',x')\to (X,x)$, with $\kappa(x')=\kappa(x)$ such that $\stX'=\stX\times_X
X'$ is fundamental.
\end{theorem}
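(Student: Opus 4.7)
The plan is to reduce to \cite[Thm.~6.4]{alper-hall-rydh_etale-local-stacks} by using the main theorem of this paper to eliminate the finite-presentation hypothesis present there.

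Since the conclusion is \'etale-local on $(X,x)$, I first replace $X$ by a quasi-compact open neighborhood of $x$, which exists because $X$ is quasi-separated. Then $X$ is quasi-compact and quasi-separated, and $\stX$ is quasi-compact and quasi-separated as well, because $\pi$ is. By Theorem~\tref{T:MT}, $\stX$ has an approximation, hence is in particular locally of approximation type.

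Examining the proof of \cite[Thm.~6.4]{alper-hall-rydh_etale-local-stacks}, the assumption that $\stX$ is of finite presentation over some algebraic space enters only through the application of the local structure theorem \cite[Thm.~5.1]{alper-hall-rydh_etale-local-stacks}, which at the closed point $x_0\in\pi^{-1}(x)$ produces an \'etale neighborhood $([\Spec B/G_{x_0}],w)\to(\stX,x_0)$ with $G_{x_0}$ the linearly reductive stabilizer at $x_0$ and $\kappa(w)=\kappa(x_0)$. This local structure theorem has been extended to arbitrary quasi-compact and quasi-separated algebraic stacks with affine stabilizers in \cite[Thm.~5.1]{alper-hall-halpern-leistner-rydh_local-third}, precisely by means of Theorem~\tref{T:MT} of the present paper. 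Granted this unconditional local structure theorem, the Luna-style \'etale slice descent carried out in \cite[\S6]{alper-hall-rydh_etale-local-stacks} applies verbatim: it depends only on the formal properties of adequate moduli spaces and the linear reductivity of $G_{x_0}$, and it produces an \'etale neighborhood $(X',x')\to(X,x)$ with $\kappa(x')=\kappa(x)$ such that $\stX\times_X X'$ is fundamental.

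The main obstacle was the lack of an unconditional local structure theorem at points with linearly reductive stabilizers. This is exactly the input that Theorem~\tref{T:MT} now supplies, through \cite[Thm.~5.1]{alper-hall-halpern-leistner-rydh_local-third}; once it is available, no new ideas are required and the argument of \cite[\S6]{alper-hall-rydh_etale-local-stacks} goes through unchanged to yield the \'etale neighborhood sought here.
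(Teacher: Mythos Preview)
Your proposal is correct and follows essentially the same approach as the paper: both invoke the non-noetherian local structure theorem \cite[Thm.~5.1]{alper-hall-halpern-leistner-rydh_local-third} (which itself rests on Theorem~\tref{T:MT}) at the point $x_0$, and then feed the resulting fundamental \'etale neighborhood into the Luna-style argument of \cite[\S6]{alper-hall-rydh_etale-local-stacks}. The paper is slightly more explicit about the individual steps---reducing to the henselization via spreading out \cite[Lem.~2.15~(1)]{alper-hall-rydh_etale-local-stacks}, arranging representability via \cite[Prop.~5.7~(2)]{alper-hall-rydh_etale-local-stacks}, and invoking Luna's fundamental lemma \cite[Thm.~3.14]{alper-hall-rydh_etale-local-stacks}---but this is exactly the content you summarize as ``the argument of \cite[\S6]{alper-hall-rydh_etale-local-stacks} goes through unchanged.''
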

\begin{proof}
It is enough to prove that $\stX\times_X \Spec \sO_{X,x}^h$ is fundamental
since this property spreads
out~\cite[Lem.~2.15~(1)]{alper-hall-rydh_etale-local-stacks}. In particular,
we may assume that $x$ is closed.
 
The residual gerbe $\stG_{x_0}$ is linearly fundamental. We can thus apply the
non-noetherian local structure theorem~\cite[Thm.~5.1]{alper-hall-halpern-leistner-rydh_local-third}\footnote{This relies on the main theorem of this paper.},
to $\stW_0:=\stX_0:=\stG_{x_0}\inj \stX$. This gives an \'etale morphism
$f\colon \stW\to \stX$ such that $\stW$ is fundamental and $f|_{\stX_0}$ is an
isomorphism. Moreover, since $\stX$ has separated diagonal, we can arrange so
that $f$ is
representable~\cite[Prop.~5.7~(2)]{alper-hall-rydh_etale-local-stacks}. Let $W$
be the adequate moduli space of $\stW$. By Luna's fundamental
lemma~\cite[Thm.~3.14]{alper-hall-rydh_etale-local-stacks},
after replacing $W$ by an open neighborhood, it holds that
$\stW:=\stX\times_X W$ and that $W\to X$ is \'etale. The result follows with
$X':=W$.
\end{proof}

\begin{theorem}[Local structure of good moduli spaces]\label{T:local-structure-gms}
Let $\pi\colon \stX\to X$ be a good moduli space with $X$ quasi-compact and
quasi-separated. Assume that $\stX$ has affine stabilizers and separated
diagonal. Then there exists a Nisnevich covering $X'\to X$ such that
$\stX'=\stX\times_X X'$ is linearly fundamental. In particular, $\pi$ has
affine diagonal.
\end{theorem}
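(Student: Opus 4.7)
The plan is to apply Theorem~\ref{T:adequate-local-structure} pointwise on $X$, assemble the resulting étale neighborhoods into a Nisnevich covering using quasi-compactness of $X$, and deduce the affine-diagonal statement by fppf descent.

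Fix a point $x\in|X|$. Since $\pi$ is a good moduli space it is in particular an adequate moduli space, and the unique closed point $x_0\in\pi^{-1}(x)$ has linearly reductive stabilizer; hence Theorem~\ref{T:adequate-local-structure} applies and produces an étale neighborhood $(U_x,u_x)\to(X,x)$ with $\kappa(u_x)=\kappa(x)$ such that $\stX\times_X U_x$ is fundamental. Because formation of a good moduli space commutes with flat base change, the induced map $\stX\times_X U_x\to U_x$ remains a good moduli space, and thus $\stX\times_X U_x$ is in fact linearly fundamental. Using that $X$ is quasi-compact, I would then select finitely many such points $x_1,\dots,x_n$ whose neighborhoods $U_{x_i}$ jointly cover $X$ and set $X':=\coprod_{i=1}^n U_{x_i}$. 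By construction $X'\to X$ is étale, surjective, and completely decomposed, i.e.\ a Nisnevich covering, and each component $\stX\times_X U_{x_i}$ of $\stX\times_X X'$ is linearly fundamental.

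For the final assertion, a linearly fundamental stack has affine relative diagonal over its good moduli space, since $\GL_n$ is affine and so is the good moduli space. Consequently the pullback of $\Delta_\pi$ along the faithfully flat morphism $X'\to X$ is affine, and since affineness of morphisms is fppf-local on the target, $\Delta_\pi$ itself is affine.

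The main obstacle---producing an étale neighborhood over which the base change is fundamental without any noetherian hypothesis on $X$---has already been handled by Theorem~\ref{T:adequate-local-structure}, which rests essentially on the main theorem of this paper via the non-noetherian local structure theorem. Beyond that input, the present argument needs only a standard quasi-compactness globalization followed by fppf descent of affineness.
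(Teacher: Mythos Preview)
Your approach mirrors the paper's: apply Theorem~\ref{T:adequate-local-structure} at every point of $X$ (the closed point in each fiber has linearly reductive stabilizer because $\pi$ is good), upgrade ``fundamental'' to ``linearly fundamental'' using that good moduli spaces are stable under flat base change, and descend affineness of the diagonal.

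There is, however, a slip in the globalization step. Having chosen finitely many points $x_1,\dots,x_n$ so that the images of $U_{x_1},\dots,U_{x_n}$ cover $X$, the map $X'=\coprod_i U_{x_i}\to X$ is \'etale and surjective, but you have not shown it is completely decomposed: the Nisnevich lifting condition has only been verified at the specific points $x_1,\dots,x_n$, not at an arbitrary $y\in|X|$. (For a concrete failure, an \'etale double cover of $\mathbb{A}^1_{\mathbb{F}_3}$ of the form $s^2=t^2+1$ is split over $t=0$ but not over $t=1$.) The remedy is immediate: the entire family $\{U_x\to X\}_{x\in|X|}$ is a Nisnevich covering by construction, and one may then either keep this family as is or invoke the standard refinement to a finite Nisnevich covering of the qcqs space $X$; note also that ``fundamental'' is preserved under passing to an affine open of the moduli space, so refinement is harmless. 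Alternatively, finiteness is irrelevant for the affine-diagonal conclusion, which only needs faithfully flat descent and is already supplied by your surjective \'etale $\coprod_i U_{x_i}\to X$. The paper is equally terse at this step, simply asserting that a Nisnevich covering results, so once this point is repaired your argument and the paper's coincide.
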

\begin{proof}
Since $\pi$ is a good moduli space, the unique closed point in every fiber has
linearly reductive stabilizer. By the previous theorem, we can thus find a
Nisnevich covering $X'\to X$ with $X'$ affine such that $\stX'$ is
fundamental. Since $\stX'\to X'$ is a good moduli space, $\stX'$ is linearly
fundamental.
\end{proof}

\begin{corollary}[Adequate with linearly reductive stabilizers is good]
Let $\pi\colon \stX\to X$ be an adequate moduli space with $X$ quasi-compact
and quasi-separated. Assume that $\stX$ has affine stabilizers and separated
diagonal. Then $\pi$ is a good moduli space if and only if every closed point
of $\stX$ has linearly reductive stabilizer.
\end{corollary}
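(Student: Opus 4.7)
The plan is to treat the two directions separately. The $(\Rightarrow)$ direction follows from the general description of good moduli spaces recalled above in the subsection: $\pi$ is universally closed, every fiber $\pi^{-1}(x)$ has a unique closed point with linearly reductive stabilizer, and any closed point of $\stX$ is the closed point of its fiber (since universal closedness forces closed points of $\stX$ to map to closed points of $X$), so it has linearly reductive stabilizer.

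For $(\Leftarrow)$, the idea is to reduce to the fundamental case via Theorem~\ref{T:adequate-local-structure} and then descend. For each $x \in |X|$, the unique closed point $x_0 \in \pi^{-1}(x)$ has linearly reductive stabilizer by hypothesis, so Theorem~\ref{T:adequate-local-structure} produces an \'etale neighborhood $(X',x') \to (X,x)$ for which $\stX \times_X X'$ is fundamental. Since $|X|$ is quasi-compact, finitely many such neighborhoods assemble into an \'etale cover $X' \to X$ (taking $X'$ to be their disjoint union) with $\stX' := \stX \times_X X'$ fundamental.

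Next I would upgrade ``fundamental'' to ``linearly fundamental''. Since $\stX' \to \stX$ is representable and \'etale, stabilizers of points in $\stX'$ agree with those of their images in $\stX$; and closed points of $\stX'$ map to closed points of $\stX$, because a closed point lying over $x' \mapsto x$ projects to a closed point of $\pi^{-1}(x) \times_{\kappa(x)} \kappa(x')$, which in turn maps to the unique closed point $x_0 \in \pi^{-1}(x)$. Hence every closed point of $\stX'$ has linearly reductive stabilizer, and by the characterization of linearly fundamental stacks from \cite{alper-hall-rydh_etale-local-stacks} (a fundamental stack whose closed points all have linearly reductive stabilizer is linearly fundamental), $\stX' \to X'$ is a good moduli space. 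Finally, being a good moduli space is \'etale-local on the target: exactness of $\pi_*$ is part of the definition required after any base change, the isomorphism $\sO_X \cong \pi_*\sO_\stX$ descends by faithfully flat descent, and quasi-compact quasi-separated descends \'etale-locally; so $\pi$ is a good moduli space. The anticipated main obstacle is the precise invocation of the AHR fact ``fundamental $+$ linearly reductive at closed points $\Rightarrow$ linearly fundamental''; once that is cited, the remaining verifications are formal.
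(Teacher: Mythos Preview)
Your $(\Rightarrow)$ direction is correct. For $(\Leftarrow)$, the overall strategy matches the paper's, but there is a gap. A minor issue first: Theorem~\ref{T:adequate-local-structure} needs the closed point of $\pi^{-1}(x)$ to have linearly reductive stabilizer, and the hypothesis only supplies this when that point is closed in $\stX$, equivalently when $x$ is closed in $X$; so you must restrict to closed $x$ (the resulting \'etale maps still cover $X$). The substantive gap is in the upgrade from fundamental to linearly fundamental. You argue that a closed point $z'$ of $\stX'$ maps to the unique closed point $x_0$ of $\pi^{-1}(x)$, where $x$ is the image in $X$ of the closed point $x'\in X'$ under $z'$, and then implicitly use that $x_0$ is closed in $\stX$. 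But this requires $x$ to be closed in $X$, and an \'etale morphism $X'\to X$ need not send closed points to closed points: nothing about the construction of $X'$ forces this, since the \'etale neighborhoods provided by Theorem~\ref{T:adequate-local-structure} are only pinned down at the single distinguished point. So the hypothesis does not let you conclude that every closed point of $\stX'$ has linearly reductive stabilizer, and the invocation of the AHR criterion is unjustified.

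The paper's proof sidesteps this by replacing $X$ with the henselization $\Spec\sO_{X,x}^h$ at a closed point $x$, rather than building an \'etale cover. After base change, $X$ is local henselian and the unique closed point of $\stX$ is $x_0$, which was closed in the original $\stX$ since $x$ was closed in the original $X$; so the hypothesis applies directly. Theorem~\ref{T:adequate-local-structure} produces a pointed \'etale neighborhood, with trivial residue extension, over which the pullback is fundamental, and over a henselian local base such a neighborhood admits a section, so $\stX$ itself is fundamental. Then \cite[Cor.~6.10]{alper-hall-rydh_etale-local-stacks} gives goodness. Checking after henselization at every closed point suffices because exactness of $\pi_*$ can be tested flat-locally and these henselizations jointly cover $X$.
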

\begin{proof}
It is enough to prove that $\pi$ is a good moduli space after replacing
$X$ with the henselization at any closed point of $X$. We can thus assume
that $X$ is local and henselian.
Theorem~\ref{T:adequate-local-structure} then tells us that
$\stX$ is fundamental and the result follows from
\cite[Cor.~6.10]{alper-hall-rydh_etale-local-stacks}.
\end{proof}

\begin{corollary}[Compact generation]
Let $\stX$ be a quasi-compact and quasi-separated algebraic stack with
affine stabilizers and separated diagonal. If $\stX$ admits a good moduli
space, then $\stX$ has the Thomason condition, that is:
\begin{enumerate}
\item $\Dqc(\stX)$ is compactly generated by a countable set of perfect
complexes; and
\item for every quasi-compact open substack $\stU\subseteq \stX$, there exists
  a compact perfect complex on $\stX$ with support $\stX\smallsetminus
  \stU$.
\end{enumerate}
\end{corollary}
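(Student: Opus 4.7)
The plan is to reduce to the case where $\stX$ is linearly fundamental via the local structure theorem (Theorem~\ref{T:local-structure-gms}) and then invoke known compact-generation results in that case together with Nisnevich descent.

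First I would apply Theorem~\ref{T:local-structure-gms} to obtain a Nisnevich covering $p\colon X'\to X$ with $\stX':=\stX\times_X X'$ linearly fundamental; since $X$ is quasi-compact we may take $X'$ affine and hence $X'\to X$ representable, \'etale, and of finite presentation, so $\stX'\to \stX$ is representable \'etale and surjective. Being linearly fundamental means $\stX'\simeq [\Spec A/\GL_n]$ for some ring $A$, and in particular $\stX'$ has the resolution property and affine good moduli space $\Spec A^{\GL_n}$. For such stacks the Thomason condition is known: the resolution property produces a set of perfect generators for $\Dqc(\stX')$ (the symmetric/exterior powers of the tautological bundle, tensored with pullbacks from $\Spec A^{\GL_n}$), which is even a countable set, and the existence of compact perfect complexes with prescribed support on the complement of any quasi-compact open follows from the standard Thomason--Neeman argument once compact generation is known and $\Dqc(\stX')$ is well-behaved, cf.\ the results of Hall--Rydh on compact generation for stacks with the resolution property.

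Next I would descend the Thomason condition from $\stX'$ to $\stX$. Because $\stX'\to\stX$ is a representable Nisnevich (in particular \'etale and of finite presentation) cover, one invokes the general Nisnevich-descent criterion for compact generation (again due to Hall--Rydh): if each stack in a finite Nisnevich cover has the Thomason condition, then so does the base, where the compact generators on $\stX$ are obtained from those on $\stX'$ (and on the appropriate lower-dimensional strata of the Nisnevich square) by a Mayer--Vietoris gluing argument. This directly yields item~(i); for item~(ii), once one has compact generation one applies Thomason's localization theorem in the stacky setting to produce, for any quasi-compact open $\stU\subseteq\stX$, a compact perfect complex with support exactly $\stX\smallsetminus \stU$.

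The main obstacle I expect is verifying that the descent machinery applies without any residual finite-presentation or noetherian hypothesis on $\stX$ itself: the classical compact-generation descent statements are often stated under such hypotheses. However, since the main theorem of this paper supplies an approximation $\stX\to\stX_0$ with $\stX_0$ noetherian, and since both the resolution property of $\stX'$ and the Nisnevich square descend to a finite level in any such approximation, one can either reduce to the noetherian case via a limit argument or appeal directly to the non-noetherian versions of these results established in \cite{hall-rydh_coherent-tannaka-duality} and related work. Either way the argument is short once Theorem~\ref{T:local-structure-gms} is in hand, and the countability in item~(i) is automatic because $\stX'$ admits a single classical generator whose tensor powers with a countable family suffice.
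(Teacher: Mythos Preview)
Your proposal is essentially the same as the paper's: the paper's proof is a one-liner stating that the result follows exactly as in \cite[Prop.~6.14]{alper-hall-rydh_etale-local-stacks} once Theorem~\ref{T:local-structure-gms} supplies the Nisnevich cover by linearly fundamental stacks, and you have correctly unpacked what that argument entails (Thomason condition for linearly fundamental stacks plus Nisnevich descent for compact generation). Your worry about residual noetherian hypotheses is exactly the point being addressed---the referenced Prop.~6.14 already handles the descent once the local structure is available, and the novelty here is that Theorem~\ref{T:local-structure-gms} now holds without finite-presentation assumptions.
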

\begin{proof}
Follows exactly as in \cite[Prop.~6.14]{alper-hall-rydh_etale-local-stacks}
using Theorem~\ref{T:local-structure-gms}.
\end{proof}

\subsubsection{Approximation of stacks with good moduli spaces}
The following two results generalize \cite[Thm.~7.3 and
  Cor.~7.4]{alper-hall-rydh_etale-local-stacks} by removing the assumption that
$\stX$ has the resolution property. By slight abuse of notation, we let $\stX$
``admits a good moduli space with affine diagonal'' mean that the morphism
$\pi\colon \stX\to X$ has affine diagonal, not that the good moduli space $X$
has affine diagonal.

\begin{theorem}\label{T:approx-gms}
Let $S$ be a quasi-compact algebraic space and let $\stX=\varprojlim_\lambda
\stX_\lambda$ be an inverse limit of quasi-compact and quasi-separated
morphisms $\{\stX_\lambda\to S\}$ of algebraic stacks with affine transition
maps. Suppose that $S$ satisfies (FC) or that $\stX$ satisfies (PC) or (N). If
$\stX$ admits a good moduli space with affine diagonal, then so does
$\stX_\lambda$ for all sufficiently large $\lambda$.
\end{theorem}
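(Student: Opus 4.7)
The plan is to use the local structure theorem (Theorem~\ref{T:local-structure-gms}) to reduce to the already-handled case \cite[Thm.~7.3]{alper-hall-rydh_etale-local-stacks}, where the resolution property was assumed. That theorem will be applied Nisnevich-locally on the good moduli space of $\stX$, and the conclusion glued back by descent of the resulting quotient groupoid.

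First I would let $\pi\colon \stX\to X$ be the good moduli space. The hypothesis that $\pi$ has affine diagonal implies that $\stX$ has affine stabilizers; moreover $\Delta_\stX$ factors as $\Delta_\pi$ (affine, hence separated) followed by the pullback of $\Delta_X$ (a locally closed immersion of the algebraic space $X$), so $\Delta_\stX$ is separated. Theorem~\ref{T:local-structure-gms} therefore applies and produces a Nisnevich cover $X'\to X$ with $X'$ affine such that $\stX':=\stX\times_X X'$ is linearly fundamental. The fiber products $\stX'':=\stX'\times_\stX \stX'$ and $\stX''':=\stX'\times_\stX \stX'\times_\stX \stX'$ are then also linearly fundamental, as representable étale base changes of $\stX'$.

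Next, the representable étale morphism $\stX'\to \stX$ of finite presentation descends to some $\stX'_{\lambda_0}\to \stX_{\lambda_0}$ by \cite[App.~B]{rydh_noetherian-approx}; for $\lambda\geq \lambda_0$ I set $\stX'_\lambda:=\stX_\lambda\times_{\stX_{\lambda_0}} \stX'_{\lambda_0}$ and define $\stX''_\lambda, \stX'''_\lambda$ analogously. Each such system has affine transitions and inherits the approximation hypothesis from $\stX\to S$. Since the three limits $\stX',\stX'',\stX'''$ are linearly fundamental, hence have both affine diagonal and the resolution property, the earlier theorem \cite[Thm.~7.3]{alper-hall-rydh_etale-local-stacks} applies: for $\lambda$ sufficiently large, each of $\stX'_\lambda$, $\stX''_\lambda$, $\stX'''_\lambda$ admits a good moduli space $X'_\lambda$, $X''_\lambda$, $X'''_\lambda$ with affine diagonal.

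By uniqueness and functoriality of good moduli spaces, the projection, composition, and unit morphisms between the $\stX^{(n)}_\lambda$ descend to analogous morphisms between the $X^{(n)}_\lambda$; the groupoid axioms involve only finitely many $2$-commutative diagrams, so they propagate to large $\lambda$ from their validity in the limit. Since $X''\to X'$ is Nisnevich in the limit and this property is finitely presented, $X''_\lambda\to X'_\lambda$ is Nisnevich for $\lambda$ sufficiently large, giving a Nisnevich groupoid; let $X_\lambda$ be its quotient algebraic space. Both the good moduli space property and affine diagonal of $\stX_\lambda\to X_\lambda$ can be checked after the Nisnevich base change $X'_\lambda\to X_\lambda$, where they hold by construction. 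The main obstacle is the bookkeeping in this last step --- verifying that the groupoid structure really does descend, that the Nisnevich property persists for large $\lambda$, and, orthogonally, that the approximation hypothesis (FC)/(PC)/(N) genuinely transfers to the Nisnevich base changes so that \cite[Thm.~7.3]{alper-hall-rydh_etale-local-stacks} can actually be invoked.
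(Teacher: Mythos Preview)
Your strategy---reduce Nisnevich-locally on the good moduli space to the case with resolution property and then glue---is natural, but it diverges from the paper's route and the gluing step is more than bookkeeping.

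A minor point first: $\stX''=\stX'\times_\stX\stX'$ need not be linearly fundamental. Its good moduli space is $X''=X'\times_X X'$, which is only quasi-affine over $X'$ unless $X$ has affine diagonal (not assumed). What you do get is that $\stX''$ has the resolution property and a good moduli space with affine diagonal, which is enough for \cite[Thm.~7.3]{alper-hall-rydh_etale-local-stacks}; so this slip is harmless.

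The substantive gap is in the descent of the groupoid. You take $X'_\lambda,X''_\lambda$ to be the good moduli spaces of $\stX'_\lambda,\stX''_\lambda$ and assert that $X''_\lambda\to X'_\lambda$ is \'etale for large $\lambda$ because its limit $X''\to X'$ is. But this map is only the one \emph{induced} on good moduli spaces by the \'etale morphism $\stX''_\lambda\to\stX'_\lambda$, and such induced maps are in general neither \'etale nor of finite presentation (think of $[\mathbb{A}^2\smallsetminus 0/\mathbb{G}_m]\hookrightarrow[\mathbb{A}^2/\mathbb{G}_m]$, whose induced map on moduli is $\mathbb{P}^1\to\Spec k$). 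Without finite presentation the standard spreading-out argument does not apply, and you also still need the square $\stX''_\lambda\simeq\stX'_\lambda\times_{X'_\lambda}X''_\lambda$ to be cartesian before the quotient $X_\lambda$ can be identified as a good moduli space of $\stX_\lambda$. This amounts to establishing Luna's fundamental lemma at each finite level, which is precisely what needs proof.

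The paper avoids this entirely. Instead of gluing moduli spaces level by level, it first constructs a \emph{single} stack $\stY_\alpha$ of finite presentation over $S$ that already admits a good moduli space with affine diagonal: one approximates $\stX$ first over $X$, then over $S$, each time writing down the candidate moduli space explicitly as a relative $\Spec$ of a pushforward and checking the good-moduli property after the \'etale cover $X'\to X$. Once such a $\stY_\alpha$ is in hand, for large $\lambda$ there is an affine morphism $\stX_\lambda\to\stY_\alpha$, and an affine morphism to a stack with a good moduli space of affine diagonal inherits one; no groupoid descent on the moduli-space side is needed.
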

\begin{proof}
The question is local on $S$ so we can assume that $S$ is quasi-separated.
Before studying the system $\{\stX_\lambda\}$, we will show that there exists
an approximation of $\stX$ over $S$ with a good moduli space.

Let $\stX\to X$ denote the good moduli space. By
Theorem~\ref{T:local-structure-gms}, there exists an \'etale surjective morphism
$X'\to X$ such that $\stX':=\stX\times_X X'$ is linearly fundamental with
good moduli space $X'$. In particular, $X'$ is affine.

Write $\stX$ as an inverse limit of stacks $\stX_\mu\to X$ of finite
presentation with affine transition maps. For sufficiently large $\mu$, the
stack $\stX_\mu\times_X X'$ is linearly fundamental
\cite[Thm.~7.3]{alper-hall-rydh_etale-local-stacks}. Let $\stY:=\stX_\mu$ for
one such $\mu$ so that $\stY':=\stY\times_X X'$ is linearly fundamental.  Let
$Y:=\Spec_X p_*\sO_\stY$ where $p\colon \stY\to X$ is the structure map.  Let
$Y':=Y\times_X X'$. Then $\stY'\to Y'$ is a good moduli space with affine
diagonal. It follows that so is $\stY\to Y$.

Now write $Y$ as an inverse limit of stacks $Y_\alpha$ of finite presentation
over $S$ with affine transition maps. For sufficiently large $\alpha$ we have
an \'etale surjective morphism $Y'_\alpha\to Y_\alpha$ of finite presentation
and a finitely presented morphism $\stY_\alpha\to Y_\alpha$ that pull back to
$Y'\to Y$ and $\stY\to Y$ respectively. In particular, $\stY'_\alpha =
\stY_\alpha\times_{Y_\alpha} Y'_\alpha$, gives an inverse system with limit the
linearly fundamental stack $\stY'$. It follows that $\stY'_\alpha$ is linearly
fundamental for sufficiently large $\alpha$
\cite[Thm.~7.3]{alper-hall-rydh_etale-local-stacks}.  Arguing as before, we
conclude that $\stY_\alpha\to \Spec_{Y_\alpha} (p_\alpha)_*\sO_{\stY_\alpha}$
is a good moduli space with affine diagonal. Note that $\stX\to \stY\to
\stY_\alpha$ is affine.  We have thus obtained an approximation which admits a
good moduli space.

Since $\stY_\alpha\to S$ is of finite presentation, we obtain a map
$\stX_\lambda\to \stY_\alpha$ for all sufficiently large $\lambda$.  After
increasing $\lambda$, this map is
affine~\cite[Thm.~C]{rydh_noetherian-approx}. It follows that $\stX_\lambda$
has a good moduli space with affine diagonal.
\end{proof}

Theorem~\ref{T:approx-gms} says that ``having a good moduli space with affine
diagonal'' can be included in the list of properties (PA) figuring in
\cite[Thms.~C and~D]{rydh_noetherian-approx}, under the assumptions (FC), (PC)
or (N).

\begin{corollary}\label{C:approx-gms}
Let $\stX$ be a quasi-compact and quasi-separated algebraic stack that admits a
good moduli space with affine diagonal. Suppose that $\stX$ satisfies (FC),
(PC) or (N). Then there exists a stack $\stX_0$ that admits a good moduli space
with affine diagonal and an affine morphism $\stX\to \stX_0$ such that $\stX_0$
is of finite presentation over a localization of $\Spec \ZZ$.
\end{corollary}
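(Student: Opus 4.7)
The plan is to deduce this corollary directly from Theorem~\ref{T:approx-gms} by producing an appropriate inverse system to which that theorem applies. The main work has already been done; what remains is to set up the limit presentation of $\stX$ and verify that the hypotheses of Theorem~\ref{T:approx-gms} are met.

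First, I would apply Corollary~\tref{C:MT-limit} to the structure morphism $\stX\to \Spec\ZZ$ in order to write $\stX=\varprojlim_\lambda \stX_\lambda$ as an inverse limit of finitely presented $\ZZ$-stacks $\stX_\lambda$ with affine transition maps. This is where the main theorem of the paper enters: without Theorem~\tref{T:MT} we would not know that such an approximating system exists for an arbitrary quasi-compact and quasi-separated $\stX$.

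Next, I would take $S$ to be $\Spec \ZZ$ (or a suitable localization thereof), which, as a regular noetherian scheme of finite Krull dimension, satisfies condition (FC). I would then verify that the hypotheses of Theorem~\ref{T:approx-gms} hold for this system over $S$: by assumption $\stX$ admits a good moduli space with affine diagonal, and the required alternative condition holds because either $S=\Spec \ZZ$ itself satisfies (FC), or $\stX$ satisfies (PC) or (N) by hypothesis. Theorem~\ref{T:approx-gms} therefore yields that $\stX_\lambda$ admits a good moduli space with affine diagonal for all sufficiently large $\lambda$.

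Finally, I would set $\stX_0:=\stX_\lambda$ for one such large $\lambda$. The induced map $\stX\to \stX_0$ is affine because all transition maps in the inverse system are, and $\stX_0$ is of finite presentation over a localization of $\Spec\ZZ$ by construction, completing the proof. The argument is essentially formal once Theorem~\ref{T:approx-gms} is in hand; the only mildly delicate point is matching the hypothesis (FC)/(PC)/(N) on $\stX$ in the corollary to the split hypothesis (FC) on $S$ or (PC)/(N) on $\stX$ in Theorem~\ref{T:approx-gms}, which is immediate once $S$ is taken to be $\Spec\ZZ$.
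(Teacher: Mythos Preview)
Your overall strategy is exactly the paper's: approximate $\stX$ over a suitable base $S$ and feed the resulting system into Theorem~\ref{T:approx-gms}. The gap is in your treatment of condition (FC). The condition (FC) means that only \emph{finitely many residue characteristics} occur; it has nothing to do with being regular noetherian of finite Krull dimension. In particular, $\Spec\ZZ$ does \emph{not} satisfy (FC), since every prime characteristic appears. So when $\stX$ satisfies (FC) but neither (PC) nor (N), your choice $S=\Spec\ZZ$ does not meet the hypothesis ``$S$ satisfies (FC)'' in Theorem~\ref{T:approx-gms}, and the argument stalls precisely at the point you flagged as ``mildly delicate.''

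The fix is what the paper does: in the (FC) case, let $S$ be the semi-localization of $\Spec\ZZ$ at the finitely many characteristics that occur in $\stX$. Then $S$ itself satisfies (FC), and $\stX\to\Spec\ZZ$ factors through $S$, so you can run the approximation (Corollary~\tref{C:MT-limit}) relative to $S$ and apply Theorem~\ref{T:approx-gms} as you intended. In the (PC) or (N) cases your choice $S=\Spec\ZZ$ is fine. With this correction, your proof matches the paper's.
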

\begin{proof}
If $\stX$ satisfies (FC), let $S$ be the semi-localization of $\Spec \ZZ$ in
the characteristics that appear in $\stX$. Otherwise, let $S=\Spec \ZZ$.
By the main theorem, $\stX$ can be written as an inverse limit of finitely
presented $S$-stacks with affine transition maps. The result now follows
from Theorem~\ref{T:approx-gms}.
\end{proof}

\subsubsection{Deformation of the resolution property}
The following is a variant of
\cite[Prop.~7.8]{alper-hall-rydh_etale-local-stacks} without excellency
assumptions. This implies that in
\cite[Setup.~7.6(c)]{alper-hall-rydh_etale-local-stacks} it is enough to
assume that $\stX_0$ has the resolution property.

\begin{proposition}
Let $\stX$ be an algebraic stack with affine diagonal and affine good moduli
space $X$. Let $\stX_0\inj \stX$ be a closed substack with good moduli space
$X_0$, which is a closed subscheme of $X$. Suppose that $(X,X_0)$ is an affine
henselian pair and that $\stX_0$ satisfies (FC), (PC) or (N). If $\stX_0$ has
the resolution property, then so does $\stX$.
\end{proposition}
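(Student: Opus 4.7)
The plan is to approximate the entire setup to reduce to the noetherian (in particular excellent) version~\cite[Prop.~7.8]{alper-hall-rydh_etale-local-stacks}, and then transfer the resolution property back to $\stX$ along an affine morphism. I would carry out the argument in three parts.

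First, I would apply Corollary~\ref{C:approx-gms} to $\stX_0$: since $\stX_0$ has affine diagonal (inherited from $\stX$), admits the affine good moduli space $X_0$, and satisfies (FC), (PC), or (N) by hypothesis, there exists an affine morphism $\stX_0 \to \stY_0$ with $\stY_0$ of finite presentation over a localization $S$ of $\Spec \ZZ$, admitting an affine good moduli space $Y_0$ with affine diagonal. Refining via~\cite[Thm.~7.3]{alper-hall-rydh_etale-local-stacks}, as in the proof of Theorem~\ref{T:approx-gms}, I may further arrange that $\stY_0$ is linearly fundamental, hence in particular has the resolution property.

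The second and central step is to extend $\stY_0$ to a noetherian approximation of the entire pair. Writing $X = \Spec A$, $X_0 = \Spec A/I$, and $\stY_0 = [\Spec C_0 / \GL_n]$ with good moduli space $Y_0 = \Spec C_0^{\GL_n}$, the affine morphism $\stX_0\to \stY_0$ corresponds to a $\GL_n$-equivariant $C_0$-algebra $D_0$ with $\Spec D_0 = \Spec C_0 \times_{\stY_0} \stX_0$. Using that $(A, I)$ is a henselian pair and $\GL_n$ is smooth, I would lift the relevant $\GL_n$-equivariant data to an affine $\GL_n$-scheme $\Spec D \to \Spec C$ over finitely generated $\ZZ$-subalgebras of $A$ inducing $\Spec D_0 \to \Spec C_0$ on the quotient by $I$. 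Setting $\stY := [\Spec C/\GL_n]$ then gives a linearly fundamental noetherian stack with good moduli space $Y := \Spec C^{\GL_n}$, a closed immersion $\stY_0 \inj \stY$ sitting over $Y_0 \inj Y$, and an affine morphism $\stX \to \stY$. After replacing $Y$ by its henselization $Y^h$ along $Y_0$ and passing to $\stY^h := \stY \times_Y Y^h$, the noetherian version~\cite[Prop.~7.8]{alper-hall-rydh_etale-local-stacks} applied to the closed immersion $\stY_0 \inj \stY^h$ over the henselian pair $(Y^h, Y_0)$ yields that $\stY^h$ has the resolution property.

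Finally, I would transfer the resolution property to $\stX$. Since $\stY^h$ is noetherian and $\stX \to \stY^h$ is affine, any coherent sheaf $\sF$ on $\stX$ pushes forward to a quasi-coherent $\sO_{\stY^h}$-module which, by noetherianity, is the directed union of its coherent subsheaves; by quasi-compactness of $\stX$, I may choose one large enough that its pullback under $\stX \to \stY^h$ surjects onto $\sF$ (the counit $f^*f_*\sF\to \sF$ being surjective as $f$ is affine). Surjecting onto such a coherent subsheaf by a vector bundle on $\stY^h$ via the resolution property there, and pulling back, yields the desired vector bundle surjection on $\stX$. The hard part is the second step, namely constructing the lift across the henselian pair while preserving the closed-pair and good-moduli-space structure; the crucial input is that $\stY_0$ is a global $\GL_n$-quotient of an affine scheme, reducing lifting to a standard deformation-theoretic construction using smoothness of $\GL_n$.
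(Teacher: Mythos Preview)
Your second step contains a genuine gap that amounts to assuming the conclusion. An affine morphism $\stX\to [\Spec C/\GL_n]$ is the same datum as a $\GL_n$-torsor $P\to\stX$ with $P$ affine, i.e., a witness that $\stX$ is fundamental and hence has the resolution property. You only possess such a torsor on $\stX_0$, and you propose to ``lift the relevant $\GL_n$-equivariant data'' across the henselian pair $(A,I)$ using smoothness of $\GL_n$. But smoothness of $\GL_n$ together with the henselian property only gives lifting of \emph{sections} over $\Spec A$: a map $\Spec(A/I)\to\GL_n$ extends to $\Spec A\to\GL_n$. It does not give lifting of $\GL_n$-torsors on the \emph{stack} $\stX_0$ to $\stX$; that deformation problem is precisely the content of \cite[Prop.~7.8]{alper-hall-rydh_etale-local-stacks}, and it is not formal even in the noetherian case. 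So the construction of $\stY$ together with an affine map $\stX\to\stY$ is circular.

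The paper's proof avoids this by approximating $\stX$ rather than $\stX_0$. Two preliminary reductions make this possible: first, since $(X,X_0)$ is henselian, $\stX$ itself satisfies (FC), (PC) or (N) \cite[Rmk.~7.1]{alper-hall-rydh_etale-local-stacks}, so Corollary~\ref{C:approx-gms} applies directly to $\stX$; second, writing $\stX_0\inj\stX$ as a limit of finitely presented closed immersions $\stX_\alpha\inj\stX$ and replacing $\stX_0$ by a suitable $\stX_\alpha$ (which retains the resolution property by \cite[Lem.~2.15~(1)]{alper-hall-rydh_etale-local-stacks}), one may assume $\stX_0\inj\stX$ is of finite presentation. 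Now $\stX=\varprojlim_\lambda\stX_\lambda$ with each $\stX_\lambda$ essentially of finite type over $\ZZ$ and admitting a good moduli space $X_\lambda$; the finitely presented closed immersion descends to $\stX_{\lambda,0}\inj\stX_\lambda$ for large $\lambda$, still with the resolution property. Since $X_\lambda$ is excellent, \cite[Prop.~7.8]{alper-hall-rydh_etale-local-stacks} applies over the henselization $X_\lambda^h$ along $X_{\lambda,0}$, and the affine map $\stX\to\stX_\lambda\times_{X_\lambda}X_\lambda^h$ (which exists because $X\to X_\lambda$ factors through $X_\lambda^h$) transfers the resolution property back. Your final transfer step is fine; the point is that the affine map from $\stX$ to a noetherian target must be produced \emph{before} invoking the deformation result, not by the lifting you describe.
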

\begin{proof}
Since $(X,X_0)$ is an henselian pair, $\stX$ also satisfies (FC), (PC) or (N)
\cite[Rmk.~7.1]{alper-hall-rydh_etale-local-stacks}. By the main theorem, we
can write $\stX_0\inj \stX$ as an inverse limit of finitely presented closed
immersions $\stX_\alpha\inj \stX$. For sufficiently large $\alpha$,
$\stX_\alpha$ also has the resolution
property~\cite[Lem.~2.15~(1)]{alper-hall-rydh_etale-local-stacks}. Note that
$(X,X_\alpha)$ also is a henselian pair.  After replacing $\stX_0$ with
$\stX_\alpha$ we can thus assume that $\stX_0\inj \stX$ is of finite
presentation.

By Corollary~\ref{C:approx-gms}, we have that $\stX=\varprojlim_\lambda
\stX_\lambda$ where $\stX_\lambda$ is essentially of finite presentation over
$\Spec \ZZ$ and admits a good moduli space. For sufficiently large $\lambda$ we
have a closed immersion $\stX_{\lambda,0}\inj \stX_{\lambda}$ such that $\stX_0
= \stX_{\lambda,0}\times_{\stX_\lambda} \stX$ and such that $\stX_{\lambda,0}$
has the resolution property.

Let $(X_\lambda,X_{\lambda,0})$ be the good moduli spaces of
$(\stX_\lambda,\stX_{\lambda,0})$. Then $X_\lambda$ is essentially of finite
type over $\Spec \ZZ$, hence excellent. We can thus apply \cite[Prop.~7.8 with
  Setup 7.6(b)]{alper-hall-rydh_etale-local-stacks} to deduce that
$\stX_\lambda\times_{X_\lambda} X_\lambda^h$ has the resolution property where
$X_\lambda^h$ is the henselization of $X_\lambda$ along $X_{\lambda,0}$. Since
$X\to X_\lambda$ factors through $X_\lambda^h$, we obtain an affine morphism
$\stX\to \stX_\lambda\times_{X_\lambda} X_\lambda^h$ and it follows that $\stX$
has the resolution property.
\end{proof}

\end{section}

% -------------------------------------------------------------------

\bibliography{noetherian-general}
\bibliographystyle{dary}

\end{document}